\documentclass[11pt,a4paper]{article}
\usepackage{amsfonts}
\usepackage{amsmath}
\usepackage{amssymb}
\usepackage{amsthm}
\usepackage{enumerate}
\numberwithin{equation}{section} \topmargin -1cm \oddsidemargin 2mm
\evensidemargin 2mm\textwidth 16.5cm \textheight 23.8cm
\begin{document}
\title{\textbf{The Quaternionic Cauchy--Szeg\"{o} Kernel on the Quaternionic Siegel
Half Space}}
\author{Jinxun Wang\thanks{\scriptsize Department of
Mathematics, Faculty of Science and Technology, University of Macau,
Taipa, Macao, China. E-mail: wangjx08@163.com}\,, ~Xingmin
Li\thanks{\scriptsize School of Computer Sciences, South China
Normal University, Guangzhou 510631, China. E-mail: lxmin57@163.com}
~and Jianquan Liao\thanks{\scriptsize Department of Mathematics,
Guangdong Institute of Education, Guangzhou 510303, China. E-mail:
liaojq80@163.com}}
\date{}
\maketitle

\parbox{0.9 \hsize}{\textbf{Abstract.} The quaternionic Cauchy--Szeg\"{o}
kernel of the Hardy space $\mathcal{H}^2(\mathcal{U}_n)$ on the
quaternionic Siegel half space $\mathcal{U}_n$ is derived and the
Hardy spaces on the octonionic Siegel half space is
investigated.}\vskip 0.3cm
\parbox{0.9 \hsize}{\textbf{Keywords:} quaternion, octonion, Heisenberg group, Hardy
space, Cauchy--Szeg\"{o} kernel}\vskip 0.3cm
\parbox{0.9 \hsize}{\textbf{AMS (2010) Subject Classification:} 30G35, 31B05, 31B10,
42B30}

\section{Introduction and main results}
The Siegel upper half space in $\mathbb{C}^{n+1}$ is defined by
$$\mathcal {U}^n=\Big\{z\in \mathbb{C}^{n+1}:~\mbox{Im}\,z_{n+1}>\sum_
{j=1}^n|z_j|^2\Big\},$$ the space $\mathcal {H}^2(\mathcal {U}^n)$,
consists of all functions $F$ holomorphic on $\mathcal{U}^n$, for
which
$$\sup_{\varepsilon>0}\int_{\partial\mathcal{U}^n}|F_\varepsilon(z)|^2
d\beta(z)<\infty,$$ where
$$F_\varepsilon(z)=F(z+\varepsilon\mathbf{i}),\quad
\mathbf{i}=(\underbrace{0,\ldots,0}
_{\scriptsize{(2n+1)\,0\mbox{s}}},i).$$

For any $F\in\mathcal {H}^2(\mathcal {U}^n),$ the boundary limit
$F^b$ exists in the sense of almost everywhere and
$L^2(\partial\mathcal{U}^n).$ Moreover, we have the integral
representation (\cite{G, Ste})
$$
F(z)=\int_{\partial\mathcal{U}^n}S(z,\omega)
F^b(\omega)d\beta(\omega),\quad z\in \mathcal{U}^n,
$$
where
$$S(z,\omega)=\frac{n!}{4\pi^{n+1}}\frac{1}{r^{n+1}(z,\omega)}$$
is called the complex Cauchy--Szeg\"{o} kernel and
$r(z,\omega)=\frac{i}{2} (\overline{\omega_{n+1}}-z_{n+1})-\sum_
1^nz_k\overline{\omega_k}.$

It is natural to ask that whether one could built up an analogous
theory in quaternionic analysis, or even in octonionic analysis? As
we know, the Heisenberg group of several complex variables can be
identified with the boundary of the Siegel upper half space
$\mathcal{U}^n$ (\cite{Ste}), this suggests that we should first
investigate the quaternionic Heisenberg group. To our knowledge, the
quaternionic Heisenberg group was first introduced by Barker and
Salamon in their paper \cite{BS}, and rediscovered in \cite{Ch},
where the Hardy space $\mathcal{H}^2(\mathcal{U}_1)$ of two
quaternionic variables was studied. But alas, the Cauchy--Szeg\"{o}
kernel obtained in \cite{Ch} is not correct. In this paper, we
derive the desired quaternionic Cauchy--Szeg\"{o} kernel of the
quaternionic Hardy space $\mathcal{H}^2(\mathcal{U}_n)$ on the
quaternionic Siegel half space
$\mathcal{U}_n\subset\mathbb{H}^{n+1}$ of several quaternionic
variables. The difficulty lies in how to determine the explicit form
of the kernel and how to deal with the involved higher order partial
derivatives of the quaternionic Cauchy kernel. These obstacles arise
because the structure of left (right) $\mathbb{H}$-regular functions
is much more complicated than that of holomorphic functions in
complex analysis. For instance, $z^n$ is holomorphic for all $n\in
\mathbb{Z}$, but there is no integer $n$ such that $q^n$ is
$\mathbb{H}$-regular, except for $n=0$. After that, we prove the
$L^p$ boundedness of the Cauchy--Szeg\"{o} projection operator, and
give a characterization for the Hardy space consists of octonionic
analytic functions on the octonionic Siegel half space, which
generalizes the quaternionic Hardy space
$\mathcal{H}^p(\mathcal{U}_n)$ to the context of octonions.

The quaternionic Siegel half space in $\mathbb{H}^{n+1}$ ($n\geq1$)
is defined by
$$\mathcal{U}_n=\big\{(q',q_{n+1})\in \mathbb{H}^{n+1}:~
\mbox{Re}\,q_{n+1}>|q'|^2\big\},$$ where $q'=(q_1,q_2,\ldots,q_n)$
and $|q'|=(\sum_1^n|q_i|^2)^\frac{1}{2}$. To simplify the notations
we write
$\overline{q'}=(\overline{q_1},\overline{q_2},\ldots,\overline{q_n})$,
$\omega'+ q'=\sum_1^n(\omega_i+q_i)$ and $\omega'\cdot
q'=\sum_1^n\omega_iq_i.$

For any $\delta>0,$ define the dilation on $\mathcal{U}_n$ by
$\delta\circ(q',q_{n+1})=(\delta q',\delta^2q_{n+1}),$ and for any
$$
\mathcal{R}=(\mathcal{R}_1,\mathcal{R}_2,\ldots,\mathcal{R}_n)\in
\mathbb{H}^n~(|\mathcal{R}_i|=1,i=1,\ldots,n),
$$
define the rotation on $\mathcal{U}_n$ by
$\mathcal {R}(q',q_{n+1})=(\mathcal {R}\cdot q',q_{n+1}).$

We denote by $\mathcal{Q}_n$ the quaternionic Heisenberg group,
viz., $\mathcal{Q}_n=\mathbb{H}^n\times
\mathbb{R}^3=\big\{[\omega',t]:~\omega' \in
\mathbb{H}^n,\,t=(t_1,t_2,t_3) \in \mathbb{R}^3\big\},$ and define
the multiplication by
$[\alpha,t]\diamond[\beta,s]=[\alpha+\beta,t_1+s_1-2\mbox{Im}_1\,
(\overline{\beta}\cdot\alpha),t_2+s_2-2\mbox{Im}_2\,
(\overline{\beta}\cdot\alpha),t_3+s_3-2\mbox{Im}_3\,
(\overline{\beta}\cdot\alpha)].$

For each $h=[\omega',t] \in \mathcal{Q}_n,$ define the translation
on $\mathcal{U}_n$ by
$$q=(q',q_{n+1})\mapsto h(q)=(q'+\omega',q_{n+1}+|\omega'|^2+2
\overline{\omega'}\cdot q'+\mathbf{e}\cdot t),$$ here
$\mathbf{e}\cdot t=\sum_{i=1}^{3}e_it_i$, and $e_i$ ($1\leq i\leq
3$) are three imaginary units of $\mathbb{H}$ (see Section 2).

The three automorphisms of the domain $\mathcal{U}_n$ mentioned
above contribute a lot to the calculation of the Cauchy--Szeg\"{o}
kernel.

For $0<p<\infty$, the Hardy space $\mathcal{H}^p(\mathcal{U}_n)$
consists of all functions $F(q)$ which are left $\mathbb{H}$-regular
on $\mathcal{U}_n$ and satisfy
$$\|F\|_{\mathcal{H}^p(\mathcal{U}_n)}:=\Big(\sup_{\varepsilon>0}
\int_{\partial\mathcal{U}_n}
|F_\varepsilon(q)|^pd\beta(q)\Big)^{1/p}<\infty,$$ where
$$F_\varepsilon(q)=F(q+\varepsilon\mathbf{e_0}),\quad\mathbf{e_0}=
(\underbrace{0,\ldots,0}_{\scriptsize{4n\,0\mbox{s}}},1,0,0,0),$$
and $d\beta(q)$ can be identified with the Lebesgue measure on
$\partial\mathcal{U}_n$.

With these notations, we obtain the following result:
\newtheorem{theo}{Theorem}[section]
\begin{theo}\label{cs}
Suppose $F\in \mathcal{H}^p(\mathcal{U}_n)$
$(\frac{2}{3}<p<\infty)$, then
\begin{enumerate}[(i)]
\item There exists an $F^b\in L^p(\partial\mathcal{U}_n),$ such that
$F_\varepsilon(q)|_{\partial\mathcal{U}_n}\rightarrow F^b$
$(\varepsilon\rightarrow 0)$ in the sense of
$L^p(\partial\mathcal{U}_n)$ norm and almost everywhere.

\item $\{F^b\}$ is a closed subspace of
 $L^p(\partial\mathcal {U}_n),$ and $\|F^b\|_{L^p(\partial\mathcal
{U}_n)}=\|F\|_{\mathcal{H}^p(\mathcal{U}_n)}.$

\item If $F\in \mathcal{H}^2(\mathcal{U}_n),$ then
$$F(q)=\int_{\partial\mathcal{U}_n}S(q,\omega)F^b
(\omega)d\beta(\omega), \quad q=(q',q_{n+1})\in \mathcal{U}_n,$$ where
$$S(q,\omega)=s(q_{n+1}+\overline{\omega_{n+1}}
-2\overline{\omega'}\cdot q')$$ is called the quaternionic
Cauchy--Szeg\"{o} kernel and
$$s(\nu)=\Big(\frac{2}{\pi}\Big)^{2n}\frac{\partial^{2n}}{\partial
x_0^{2n}}E(\nu),$$
$$E(\nu)=\frac{1}{\sigma_4}\frac{\overline{\nu}}{|\nu|^4}=\frac{1}
{2\pi^2}\frac{\overline{\nu}} {|\nu|^4},
\quad\nu=\sum_{i=0}^{3}x_ie_i\in \mathbb{H}.$$
\end{enumerate}
\end{theo}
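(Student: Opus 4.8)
The plan is to handle (i) and (ii) by the subharmonic-majorant method adapted to the quaternionic Heisenberg geometry, and to read off the kernel in (iii) from the reproducing property together with the large symmetry group of $\mathcal{U}_n$. For (i)--(ii) I would first observe that a left $\mathbb{H}$-regular $F=(F_0,F_1,F_2,F_3)$ is, in particular, a solution of a generalized Cauchy--Riemann (Stein--Weiss) system in $\mathbb{R}^4$, so that $|F|^p$ is subharmonic whenever $p\ge\frac23$; the value $\frac23=\frac{4-2}{4-1}$ is precisely the critical subharmonicity index in four real dimensions, which is what forces the hypothesis $p>\frac23$. Subharmonicity majorizes $|F|$ on admissible (Kor\'{a}nyi) approach regions by the Hardy--Littlewood maximal function of the boundary data over the homogeneous space $\mathcal{Q}_n$, and the maximal theorem on $\mathcal{Q}_n$ then bounds the nontangential maximal function in $L^p$. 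A Fatou-type argument produces the almost-everywhere limit $F^b$, and domination by the ($L^p$-integrable) maximal function upgrades convergence to the $L^p$ norm; the identity $\|F^b\|_{L^p(\partial\mathcal{U}_n)}=\|F\|_{\mathcal{H}^p(\mathcal{U}_n)}$ and the closedness of $\{F^b\}$ in $L^p(\partial\mathcal{U}_n)$ are then immediate.

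For (iii) I would take $p=2$ in the subharmonic estimate, which makes point evaluation $F\mapsto F(q)$ a bounded functional on $\mathcal{H}^2(\mathcal{U}_n)$ and hence yields a reproducing kernel $S(q,\omega)$ with $F(q)=\int_{\partial\mathcal{U}_n}S(q,\omega)F^b(\omega)\,d\beta(\omega)$. The three automorphisms of $\mathcal{U}_n$ then dictate its shape. The Heisenberg translations $h\in\mathcal{Q}_n$ preserve $d\beta$ and act on $\mathcal{H}^2$, so $S(h(q),h(\omega))=S(q,\omega)$; since $\mathcal{Q}_n$ acts simply transitively on $\partial\mathcal{U}_n$, this collapses $S$ to a function of the single group-difference variable $\nu=q_{n+1}+\overline{\omega_{n+1}}-2\,\overline{\omega'}\cdot q'$, giving $S(q,\omega)=s(\nu)$. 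The dilation $\delta\circ$ scales $\nu$ by $\delta^2$ and $d\beta$ by $\delta^{4n+6}$, forcing $s$ to be homogeneous of degree $-(2n+3)$; the rotations $\mathcal{R}$ impose the remaining radial symmetry, and the regularity of $F$ forces $s$ to be left $\mathbb{H}$-regular on $\{\mathrm{Re}\,\nu>0\}$.

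It remains to identify $s$, and this is the crux that the authors flag. I would argue by descent from the Cauchy--Fueter kernel. In the base case $n=0$ the half-space is flat and the quaternionic Cauchy integral formula shows that its Szeg\"o kernel is, up to the normalization $\sigma_4=2\pi^2$, the fundamental solution $E(\nu)=\frac{1}{2\pi^2}\frac{\overline{\nu}}{|\nu|^4}$ of the Cauchy--Fueter operator, in agreement with $s=(\frac{2}{\pi})^{0}\partial_{x_0}^{0}E$. For general $n$ I would take the Fourier--Laplace transform in the central variable and integrate out the $n$ horizontal quaternionic variables $q'$; the quadratic term $|\omega'|^2$ in the group law turns each of these into a Gaussian integral over $\mathbb{H}\cong\mathbb{R}^4$, and the $n$ resulting factors, after inverting the transform, amount precisely to applying $(\frac{2}{\pi})^{2n}\partial_{x_0}^{2n}$ to $E$, so that $s(\nu)=(\frac{2}{\pi})^{2n}\partial_{x_0}^{2n}E(\nu)$. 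The main obstacle, and the technical heart of the argument, is exactly this last step: computing the iterated derivative $\partial_{x_0}^{2n}(\overline{\nu}/|\nu|^4)$ in closed form and verifying that the resulting $s$ is indeed regular and reproduces every $F\in\mathcal{H}^2(\mathcal{U}_n)$. I expect to settle the derivative by an induction yielding a closed expression in $x_0$ and $|\nu|$, and to confirm the reproducing identity by testing it on the dense family $\{F_\varepsilon\}$ and letting $\varepsilon\to0$ via part (i).
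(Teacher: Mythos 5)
Your treatment of (i) and (ii) via subharmonicity of $|F|^p$ for $p>\frac{2}{3}$ and the maximal theorem on the homogeneous group $\mathcal{Q}_n$ is consistent with what the paper does (it simply defers to the arguments in Stein's book), and your symmetry analysis in (iii) — using the translations to collapse $S(q,\omega)$ to $s(\nu)$ with $\nu=q_{n+1}+\overline{\omega_{n+1}}-2\overline{\omega'}\cdot q'$, and the dilations to force homogeneity of degree $-(2n+3)$ — matches the paper exactly. (One small slip: the rotations act only on $q'$ and are already exhausted in reducing $S$ to a function of $\nu$; they do not impose any radial symmetry on $s$ as a function of $\nu\in\mathbb{H}$, and indeed the final $s$ is not radial.)

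The genuine gap is in your identification of $s$. Homogeneity of degree $-(2n+3)$ together with left $\mathbb{H}$-regularity does \emph{not} determine $s$ up to a scalar: the space of such functions has quaternionic dimension $C_{2n+2}^{2}=(n+1)(2n+1)$, spanned by the mixed derivatives $\partial^{2n}E/\partial x_0^{r}\partial x_1^{s}\partial x_2^{t}$ with $r+s+t=2n$, so the whole difficulty is to show that all coefficients except the one on $\partial_{x_0}^{2n}E$ vanish. Your proposed route — a Fourier--Laplace transform in the central variable followed by Gaussian integration over the $n$ horizontal copies of $\mathbb{H}$ — asserts exactly this conclusion ("the $n$ resulting factors amount precisely to applying $(\frac{2}{\pi})^{2n}\partial_{x_0}^{2n}$ to $E$") without justification, and the justification is nontrivial: unlike the complex case, there is no exponential $e^{-\nu\lambda}$ that is $\mathbb{H}$-regular (the paper stresses that no power $q^{n}$, $n\neq 0$, is regular), so no Paley--Wiener/Fock-space representation of $\mathcal{H}^2(\mathcal{U}_n)$ is available to make the horizontal integration factor as you claim, and the noncommutativity prevents the quaternion-valued Gaussian integrals from collapsing termwise. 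The paper instead expands $s$ in the full $(n+1)(2n+1)$-dimensional basis with unknown quaternionic coefficients $c_{r,s,t}$, tests the reproducing identity against an explicit family $F_\lambda$ of derivatives of the Newton potential, evaluates the resulting multi-integrals by Parseval's theorem and spherical coordinates (Propositions 3.1 and 3.2), and solves the ensuing infinite linear system by successive limiting arguments to kill every coefficient but $c(2n,0,0)=-2^{2n-2}/\pi^{2n+2}$. Your sketch skips precisely this computation, which is the technical heart of the theorem.
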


If we use $\mathbb{A}$ to denote the complex field $\mathbb{C}$ or
the skew field $\mathbb{H},$ $m=2$ or $4$ is the dimension of
$\mathbb{A}$ over $\mathbb{R},$ then the Siegel half space in
$\mathbb{A}^{n+1}$ can be written uniformly as
$\mathcal{D}_n=\big\{(\zeta',\zeta_{n+1})\in
\mathbb{A}^{n+1}:\mbox{Re}\,\zeta_{n+1}>|\zeta'|^2\big\}.$
Similarly, we can define the function spaces
$\mathcal{H}^p(\mathcal{D}_n),$ thus we can write the complex-valued
and the quaternion-valued Cauchy--Szeg\"{o} kernels in one form:

\begin{theo}
For every $F\in \mathcal{H}^2(\mathcal{D}_n),$ we can represent $F$
by the integral formula
$$F(\zeta)=\int_{\partial\mathcal{D}_n}S(\zeta,\omega)F^b
(\omega)d\beta(\omega),\quad \zeta=(\zeta',\zeta_{n+1})\in
\mathcal{D}_n,$$ where
$$
S(\zeta,\omega)=s(\zeta_{n+1}+\overline{\omega_{n+1}}
-2\overline{\omega'}\cdot \zeta'),
$$
$$s(\nu)=\Big(-\frac{2}{\pi}\Big)^{\frac{mn}{2}}
\frac{\partial^{\frac{mn}{2}}}{\partial x_0^{\frac{mn}{2}}}E(\nu),$$
$$E(\nu)=\frac{1}{\sigma_m}\frac{\overline{\nu}}
{|\nu|^m}=\frac{\Gamma(\frac{m}{2})} {2\pi^\frac{m}{2}}
\frac{\overline{\nu}}{|\nu|^m},\quad\nu=\sum_ix_ie_i\in
\mathbb{A}.$$
\end{theo}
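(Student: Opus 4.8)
The plan is to treat Theorem 2 as a unification that requires no new analysis beyond what is already in hand: the case $\mathbb{A}=\mathbb{H}$ ($m=4$) is literally the content of Theorem~\ref{cs}(iii), and the case $\mathbb{A}=\mathbb{C}$ ($m=2$) is the classical Cauchy--Szeg\"o representation recalled in the Introduction from \cite{G, Ste}. Thus the whole proof reduces to checking that the single formula for $s(\nu)$ specializes correctly in each of the two cases, which is a matter of evaluating the stated constants and the $x_0$-derivatives.

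First I would dispose of the quaternionic case. Here $m=4$, so $\frac{mn}{2}=2n$ and $\bigl(-\frac{2}{\pi}\bigr)^{2n}=\bigl(\frac{2}{\pi}\bigr)^{2n}$ because the exponent is even; moreover $\sigma_4=\frac{2\pi^2}{\Gamma(2)}=2\pi^2$ and $\frac{\Gamma(m/2)}{2\pi^{m/2}}=\frac{\Gamma(2)}{2\pi^2}=\frac{1}{2\pi^2}$. With these substitutions the expression for $s(\nu)$ in Theorem 2 coincides verbatim with the one in Theorem~\ref{cs}(iii), and the integral representation is identical, so nothing further is needed here.

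The substantive step is the complex case. The domain $\mathcal{D}_n$ is the \emph{right} half space $\{\operatorname{Re}\zeta_{n+1}>|\zeta'|^2\}$ rather than the upper half space $\mathcal{U}^n=\{\operatorname{Im}z_{n+1}>\sum|z_j|^2\}$ of the Introduction, so I would first pass between them by the biholomorphic rotation $\zeta'=z'$, $\zeta_{n+1}=-i z_{n+1}$, which carries $\mathcal{U}^n$ onto $\mathcal{D}_n$ and transports the Hardy space together with its reproducing formula. Under this change of variables a direct computation gives
$$\nu=\zeta_{n+1}+\overline{\omega_{n+1}}-2\overline{\omega'}\cdot\zeta' = i(\overline{u_{n+1}}-z_{n+1})-2\sum_{k=1}^n z_k\overline{u_k}=2\,r(z,u),$$
where $u$ denotes the rotation of $\omega$. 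It then remains to evaluate $s(\nu)$ for $m=2$. Since $\nu\in\mathbb{C}$ we have $E(\nu)=\frac{1}{2\pi}\frac{\overline{\nu}}{|\nu|^2}=\frac{1}{2\pi\nu}$, and because this is holomorphic off the origin, $\frac{\partial^n}{\partial x_0^n}E(\nu)=\frac{(-1)^n n!}{2\pi\,\nu^{n+1}}$. Plugging this into the formula with the prefactor $\bigl(-\frac{2}{\pi}\bigr)^n$ cancels the sign $(-1)^n$ and yields
$$s(\nu)=\Bigl(-\frac{2}{\pi}\Bigr)^n\frac{(-1)^n n!}{2\pi\,\nu^{n+1}} = \frac{2^{\,n-1}n!}{\pi^{n+1}\nu^{n+1}} = \frac{n!}{4\pi^{n+1}}\frac{1}{r^{n+1}(z,u)},$$
which is exactly the classical kernel $S(z,u)$, establishing the claim.

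I expect the only genuine obstacle to be the bookkeeping in the complex case. One must verify that the negative sign in $\bigl(-\frac{2}{\pi}\bigr)^{mn/2}$ is precisely what is needed to absorb the factor $(-1)^{mn/2}$ produced by the $x_0$-derivatives — a factor that is invisible when $m=4$ (the exponent being even) but essential when $m=2$ — and that the relation $\nu=2r$ correctly accounts for the $2^{n+1}$ in the denominator so that the constant collapses to $n!/(4\pi^{n+1})$. Checking that the rotation $\zeta_{n+1}=-iz_{n+1}$ intertwines the two reproducing formulas, i.e.\ that the surface measure $d\beta$ and the boundary identification are preserved, is routine but should be stated explicitly so that the reduction to \cite{G, Ste} is rigorous.
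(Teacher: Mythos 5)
Your proposal is correct and matches the paper's (entirely implicit) treatment: the paper offers no separate proof of this theorem, presenting it as the common form of Theorem~\ref{cs}(iii) for $m=4$ and of the classical complex Cauchy--Szeg\"o representation from \cite{G,Ste} for $m=2$, which is precisely the reduction you carry out. Your explicit verification of the complex case --- the rotation $\zeta_{n+1}=-iz_{n+1}$ carrying $\mathcal{U}^n$ onto $\mathcal{D}_n$, the identity $\nu=2r(z,u)$, and the constant check $\bigl(-\tfrac{2}{\pi}\bigr)^{n}\tfrac{(-1)^n n!}{2\pi(2r)^{n+1}}=\tfrac{n!}{4\pi^{n+1}r^{n+1}}$ --- correctly supplies the bookkeeping the paper omits.
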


The rest of this paper is organized as follows: Section 2 contains
some basic knowledge of quaternion algebra, Cayley algebra and
respectively, their analysis. Section 3 is mainly devoted to the
proof of Theorem \ref{cs}. In the last section, we discuss the
octonionic Siegel half space $\mathcal{U}$, the octonionic
Heisenberg group $\mathcal{O}$ and the octonionic Hardy spaces
$\mathcal{H}^p(\mathcal{U})$.

\section{Preliminaries}

If an algebra $\mathbb{A}$ is meanwhile a normed vector space, and
its norm ``$\|\cdot\|$'' satisfies $\|ab\|=\|a\|\|b\|$, then we call
$\mathbb{A}$ a normed algebra. If $ab=0$ ($a, b\in \mathbb{A}$)
implies $a=0$ or $b=0$, then we call $\mathbb{A}$ a division
algebra. Early in 1898, Hurwitz had proved that the real numbers
$\mathbb{R},$ complex numbers $\mathbb{C},$ quaternions $\mathbb{H}$
and octonions $\mathbb{O}$ are the only normed division algebras
over $\mathbb{R}$ (\cite{Hur}), with the imbedding relation
$\mathbb{R}\subseteq \mathbb{C}\subseteq \mathbb{H}\subseteq
\mathbb{O}$.

Any quaternion $q\in \mathbb{H}$ is of the form
$q=\sum_{i=0}^3x_ie_i$ (we often omit the algebraic unit element
$e_0$ and write $q=x_0+x_1e_1+x_2e_2+x_3e_3$), where $\mbox{Re}\,q=
x_0$, $\mbox{Im}_i\,q= x_i$ ($1\leq i\leq 3$) all belong to
$\mathbb{R}$ and the basis $e_i$ ($0\leq i\leq 3$) satisfies
$e_0^2=e_0,$ $e_ie_0=e_0e_i=e_i,$ $e_i^2=-1$ ($1\leq i\leq 3$) and
$$e_1e_2=e_3=-e_2e_1,~e_2e_3=e_1=-e_3e_2,~e_3e_1=e_2=-e_1e_3.$$
$|q|=(\sum_0^3x_i^2)^\frac{1}{2}$ is the norm of $q,$ and
$\overline{q}=x_0e_0-\sum_{i=1}^3x_ie_i$ is the conjugate of $q$.
The inverse of $q$ ($q\neq0$) is given by
$q^{-1}=\overline{q}/|q|^2$, due to
$q\overline{q}=\overline{q}q=|q|^2$. For any $q_1,q_2,q_3\in
\mathbb{H},$ we have $|q_1q_2|=|q_1||q_2|,$
$\overline{q_1q_2}=\overline{q_2} \, \overline{q_1}$ and
$(q_1q_2)q_3=q_1(q_2q_3).$ With respect to the multiplication law,
quaternions $\mathbb{H}$ is associative but non-commutative.

A function $f\in C^1(\Omega,\mathbb{H})$ is said to be left (right)
$\mathbb{H}$-regular in the open set $\Omega\subset \mathbb{R}^4$ if
and only if $$Df=e_0\frac{\partial f}{\partial x_0}+e_1
\frac{\partial f}{\partial x_1}+e_2\frac{\partial f}{\partial
x_2}+e_3\frac{\partial f}{\partial x_3}=0$$
$$\Big(fD=\frac{\partial f}{\partial x_0}e_0+
\frac{\partial f}{\partial x_1}e_1+\frac{\partial f}{\partial
x_2}e_2+\frac{\partial f}{\partial x_3}e_3=0 \Big).$$ If a left
$\mathbb{H}$-regular function is meanwhile right
$\mathbb{H}$-regular, then we call it an $\mathbb{H}$-regular
function.

The study of quaternionic analysis was started from 1930s
(\cite{F1,F2,Su}), now it becomes more and more important in physics
and engineering.

As the largest normed division algebra, octonion is discovered by
John T. Graves in 1843, and then by Arthur Cayley in 1845
independently, which is sometimes referred to as Cayley number or
the Cayley algebra, it is an 8 dimensional division algebra over
$\mathbb{R}$ with the basis $e_0,e_1,\ldots,e_7$ satisfying
$e_0^2=e_0$, $e_ie_0=e_0e_i=e_i,$ $e_i^2=-1,$ $i=1,2,\ldots,7.$
Denote
$$W=\{(1,2,3),(1,4,5),(1,7,6),(2,4,6),(2,5,7),(3,4,7),(3,6,5)\},$$
then any triple $(\alpha,\beta,\gamma)\in W$ obeys $$e_\alpha
e_\beta=e_\gamma=-e_\beta e_\alpha,\quad e_\beta
e_\gamma=e_\alpha=-e_\gamma e_\beta,\quad e_\gamma
e_\alpha=e_\beta=- e_\alpha e_\gamma,$$ which completely determine
the multiplication of octonions by linearity.

For any $x=\sum_0^7 x_ie_i \in \mathbb{O}$, $\mbox{Re}\,x=x_{0}$ is
called the scalar (real) part of $x$ and
$\overrightarrow{x}=\sum_1^7 x_ie_i$ is called its vector part. The
$i$th component $x_i$ is denoted by $\mbox{Im}_i\,x$ ($1\leq i\leq
7).$ Define
$\overline{x}=\sum_0^7x_i\overline{e_i}=x_0-\overrightarrow{x}$ as
the conjugate of $x$, and $|x|=(\sum_0^7x_i^2)^\frac{1}{2}$ as the
norm (modulus) of $x$, they satisfy: $|xy|=|x||y|,$
$x\overline{x}=\overline{x}x=|x|^2,$
$\overline{xy}=\overline{y}\,\overline{x}$ $(x,y\in \mathbb{O}).$ If
$x\neq0,$ $x^{-1}=\overline{x}/{|x|^2}$ gives the inverse of $x.$

Octonionic multiplication is neither commutative nor associative.
$[x, y, z]=(xy)z-x(yz)$ is called the associator of $x, y, z\in
\mathbb{O},$ which satisfies
$$
[x,y,z]=[y,z,x]=-[y,x,z], \quad [x,x,y]=0=[\overline{x},x,y].
$$

The octonionic analysis, which is a generalization of quaternionic
analysis to higher dimensions, was studied systematically since 1995
(\cite{L1}). Suppose $\Omega$ is an open subset of $\mathbb{R}^8$,
if $f\in C^1(\Omega,\mathbb{O})$ satisfies
$Df=\sum_0^{7}e_{i}\frac{\partial f}{\partial x_{i}}=0$
($fD=\sum_0^{7} \frac{\partial f}{\partial x_{i}}e_{i}=0$), then $f$
is said to be left (right) $\mathbb{O}$-analytic in $\Omega$, here
the Dirac operator $D$ and its conjugate $\overline{D}$ are defined
by $D=\sum_0^7e_i\frac{\partial}{\partial x_i}$ and $\overline
D=\sum_0^7\overline{e_i}\frac{\partial}{\partial x_i}$ respectively.
A function $f$ is $\mathbb{O}$-analytic means that $f$ is left
$\mathbb{O}$-analytic, and also right $\mathbb{O}$-analytic. From
$\overline{D}(Df)=(\overline{D}D)f=\triangle
f=f(D\overline{D})=(fD)\overline{D}$, we know that any left (right)
$\mathbb{O}$-analytic ($\mathbb{H}$-regular) function is always
harmonic.

In order to build up the theory of $H^p$-spaces in
higher-dimensional Euclidean spaces, in 1960, E. M. Stein and G.
Weiss generalized the notion of holomorphic functions to the system
of conjugate harmonic functions (\cite{SW}), which is now called the
Stein--Weiss conjugate harmonic system, it is a vector of harmonic
functions $(\mu_0, \mu_1, \ldots, \mu_n)$ of variables
$(x_0,x_1,\ldots,x_n)$, whose components satisfy the following
generalized Cauchy--Riemann equations:

\begin{equation*}
\left\{
\begin{aligned}
&\sum_{i=0}^n\frac{\partial\mu_i}{\partial x_i}=0,\\
&\frac{\partial\mu_j}{\partial x_k}=\frac{\partial \mu_k}{\partial
x_j},\quad 0\leq j<k\leq n.
\end{aligned} \right.
\end{equation*}

If $F(x_0,x_1,\ldots,x_7)=(f_0,f_1,\ldots,f_7)$ is a Stein--Weiss
conjugate harmonic system in $\Omega$, then
$\overline{F}=f_0-\sum_{i=1}^7f_ie_i$ is an $\mathbb{O}$-analytic
function (\cite{L1}). But inversely, this is not true (\cite{LL}).
For more information and recent progress about octonionic analysis,
we refer the reader to [2, 12--17].

\section{The proof of Theorem \ref{cs} and the Cauchy--Szeg\"{o} projection operator}
\begin{proof}[\textbf{Proof of Theorem \ref{cs}}]
By analogous discussions as in \cite{Ste}, we can easily prove (i)
and (ii). As to (iii), by the same method in the case of several
complex variables, we can show the existence and uniqueness of
$S(q,\omega),$ and $S(q,\omega)=\overline{S(\omega,q)}.$ The proofs
are omitted here.

Invoking the three automorphisms of the domain $\mathcal{U}_n$
mentioned in Section 1, we can derive the following integral
identities:
\begin{align*}
&F(q)=\int_{\partial\mathcal{U}_n}S(\delta\circ
q,\delta\circ\omega)\delta^{4n+6}F^b(\omega)d\beta(\omega),&
&\forall \delta>0,\\
&F(q)=\int_{\partial\mathcal{U}_n}S(\mathcal{R}(q),\mathcal
{R}(\omega))F^b(\omega)d\beta(\omega),& &\forall \mathcal {R},\\
&F(q)=\int_{\partial\mathcal{U}_n}S(h(q),h(\omega))F^b(\omega )
d\beta(\omega),& &\forall h\in\mathcal {Q}_n.
\end{align*}
These imply that $$S(q,\omega)=S(\delta\circ q,\delta\circ\omega)
\delta^{4n+6}=S(\mathcal {R}(q),\mathcal
{R}(\omega))=S(h(q),h(\omega)).$$

Hence, $s(q_{n+1}):=S(q,0)$ is independent of $q'$ and satisfies
$$S(q,\omega)=s(q_{n+1}+\overline{\omega_{n+1}}-2\overline
{\omega'}\cdot q').$$ Further more, $s(q_{n+1})$ is left $\mathbb
{H}$-regular with respect to one quaternionic variable $q_{n+1}$ and
homogeneous of negative order $-2n-3.$ Similar to the method in
\cite{BDS}, we can show that
$$\Big\{\frac{\partial^{2n}
}{\partial x_0^r\partial x_1^s\partial x_2^t}E(\nu):
r+s+t=2n\Big\}$$ forms a basis of this homogeneous class, where
$E(\nu)=\frac{1}{2\pi^2}\frac{\overline{\nu}}{|\nu|^4}\,
(\nu=\sum_{i=0}^{3}x_ie_i\in \mathbb{H})$ is called the quaternionic
Cauchy kernel which is $\mathbb{H}$-regular in
$\mathbb{R}^4\setminus\{0\}$. So the function $s(\cdot)$ must be of
the form
$$s(\nu)=\sum_{r+s+t=2n}\frac{\partial^{2n}
E(\nu)}{\partial x_0^r\partial x_1^s\partial x_2^t}c_{r,s,t}.$$ The
rest of the proof will be devoted to the determination of the
coefficients $c_{r,s,t}\in \mathbb{H}$.

For this purpose, it is convenient for the readers that we give an
outline here. That is, first, we build up a linear system of
infinite equations that contains the undetermined coefficients by
the special value method, and then solve the system. Here we should
point out that in the case of several complex variables, to
determine the complex Cauchy--Szeg\"{o} kernel, we just need to
compute one unknown coefficient, and the computation is direct and
easy. While in the quaternionic case, there are $(n+1)(2n+1)$
unknown coefficients to compute, due to the dimension of the space
of homogeneous left $\mathbb{H}$-regular functions of negative order
$-(2n+3)$ is $C_{2n+2}^2=(n+1)(2n+1)$. Also, in such case, the
higher order partial derivatives of the quaternionic Cauchy kernel
$E(\nu)$ are not easy to handle with, unless $n$ is specific and
small. So we would use some different techniques to deal with it. To
be specific, we calculate the multi-integrals by Fourier transform
and spherical coordinates transform. While solving the system, we
adopt the limiting arguments.

In what follows we use $N$ to denote the Newton potential
$\frac{1}{|\nu|^2}=\frac{1}{x_0^2+x_1^2+x_2^2+x_3^2}$ in
$\mathbb{R}^4$. We start by proving two basic properties concerning
the multi-integrals.

\newtheorem{prop}{Proposition}[section]
\begin{prop}
\label{p1}For any $p_0+p_1+p_2+p_3=\alpha$ and
$q_0+q_1+q_2+q_3=\gamma$, we have
\begin{align*}
\int_{\mathbb{R}^3}&\frac{\partial^{\alpha}N}{\partial x_0^{p_0}
\partial x_1^{p_1}\partial
x_2^{p_2}\partial x_3^{p_3}}\frac{\partial^{\gamma}N}{\partial
x_0^{q_0}\partial x_1^{q_1}\partial x_2^{q_2}\partial
x_3^{q_3}}dx_1dx_2dx_3\\=
\int_{\mathbb{R}^3}&2^{\alpha+\gamma}\pi^{\alpha+\gamma+2}
(-1)^{p_0+\gamma}i^{\alpha+\gamma-p_0-q_0}
(x_1^2+x_2^2+x_3^2)^{\frac{p_0+q_0}{2}-1}\\ &\times
x_1^{p_1+q_1}x_2^{p_2+q_2}x_3^{p_3+q_3}e^{-4\pi x_0
\sqrt{x_1^2+x_2^2+x_3^2}}dx_1dx_2dx_3.
\end{align*}
\end{prop}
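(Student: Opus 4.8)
The plan is to evaluate both sides by passing to the Fourier transform in the three variables $x_1,x_2,x_3$, treating $x_0>0$ as a fixed parameter and writing $x'=(x_1,x_2,x_3)$. Because $x_0>0$, the function $N=1/(x_0^2+x_1^2+x_2^2+x_3^2)$ is smooth in $x'$ and decays like $|x'|^{-2}$ as $|x'|\to\infty$; all of its derivatives, and the products of two such derivatives that appear on the left, therefore lie in $L^2(\mathbb{R}^3)$, so the partial Fourier transform and the Parseval identity apply without difficulty. This reduces the identity to a bookkeeping computation once the transform of $N$ is known.

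First I would establish the key transform formula: with the convention $\hat\varphi(\xi')=\int_{\mathbb{R}^3}\varphi(x')e^{-2\pi i x'\cdot\xi'}\,dx'$, passing to spherical coordinates and using the elementary integral $\int_0^\infty \frac{r\sin(br)}{r^2+a^2}\,dr=\frac{\pi}{2}e^{-ab}$ (for $a,b>0$) gives
\[
\widehat{N}(x_0;\xi')=\frac{\pi}{|\xi'|}\,e^{-2\pi x_0|\xi'|}.
\]
Differentiation then transforms cleanly: each $\partial_{x_j}$ ($j=1,2,3$) becomes multiplication by $2\pi i\,\xi_j$, while each $\partial_{x_0}$ brings down a factor $-2\pi|\xi'|$ from the exponential. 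Hence, writing $P=p_1+p_2+p_3$ and abbreviating by $f,g$ the two differentiated potentials on the left,
\[
\hat f(x_0;\xi')=\pi\,(2\pi i)^{P}(-2\pi)^{p_0}\,\xi_1^{p_1}\xi_2^{p_2}\xi_3^{p_3}\,|\xi'|^{p_0-1}e^{-2\pi x_0|\xi'|},
\]
and the analogous expression holds for $\hat g$ with the $q$'s and $Q=q_1+q_2+q_3$.

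Next I would assemble the two sides through the (unconjugated) Parseval identity $\int_{\mathbb{R}^3}f g\,dx'=\int_{\mathbb{R}^3}\hat f(\xi')\,\hat g(-\xi')\,d\xi'$. The reflection $\xi'\mapsto-\xi'$ inside $\hat g$ produces the factor $(-1)^{Q}$ while leaving $|\xi'|$ unchanged; multiplying $\hat f(\xi')$ by $\hat g(-\xi')$ and collecting the constants yields $(2\pi)^{P+Q+p_0+q_0}=2^{\alpha+\gamma}\pi^{\alpha+\gamma}$ from the powers of $2\pi$, an extra $\pi^2$ from the two copies of $\widehat N$, the phase $i^{P+Q}=i^{\alpha+\gamma-p_0-q_0}$, and the sign $(-1)^{p_0+q_0}(-1)^Q=(-1)^{p_0+\gamma}$. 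Relabelling the dummy variable $\xi'$ as $x'$ reproduces exactly the right-hand side of the Proposition, including the monomial $x_1^{p_1+q_1}x_2^{p_2+q_2}x_3^{p_3+q_3}$, the factor $|x'|^{p_0+q_0-2}=(x_1^2+x_2^2+x_3^2)^{(p_0+q_0)/2-1}$, and the kernel $e^{-4\pi x_0|x'|}$.

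The computation is essentially forced once the transform of $N$ is in hand, so the main obstacle is purely the bookkeeping: tracking the interacting powers of $2\pi$, $i$, and $(-1)$ so that they collapse precisely to $2^{\alpha+\gamma}\pi^{\alpha+\gamma+2}(-1)^{p_0+\gamma}i^{\alpha+\gamma-p_0-q_0}$. Two points demand care. The formula $\widehat N(x_0;\xi')=\frac{\pi}{|\xi'|}e^{-2\pi x_0|\xi'|}$ uses $x_0>0$ so that $|x_0|=x_0$ and each $\partial_{x_0}$ acts by a clean scalar factor; this is exactly the regime relevant to the kernel, but it must be recorded. One should also confirm convergence of the resulting $\xi'$-integral: near the origin the integrand is comparable to $|\xi'|^{\alpha+\gamma-2}$, which is locally integrable in $\mathbb{R}^3$ since $\alpha+\gamma-2>-3$, and at infinity the factor $e^{-4\pi x_0|\xi'|}$ forces rapid decay.
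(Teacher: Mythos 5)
Your argument is correct and follows essentially the same route as the paper: take the partial Fourier transform in $(x_1,x_2,x_3)$ with $x_0>0$ fixed, establish $\widehat N=\frac{\pi}{|\xi'|}e^{-2\pi x_0|\xi'|}$, and apply Parseval, after which the constants collapse exactly as you describe. The only (immaterial) differences are that you compute $\widehat N$ in one step via spherical coordinates where the paper iterates one- and two-dimensional integrals, and that you use the unconjugated multiplication formula $\int fg=\int\hat f(\xi')\hat g(-\xi')\,d\xi'$ where the paper uses the conjugated Parseval identity for the real function $g$ — these yield the same factor $(-1)^{q_1+q_2+q_3}$.
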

\renewcommand\proofname{Proof}
\begin{proof}
By Parseval's theorem,
\begin{align*}
&\int_{\mathbb{R}^3}\frac{\partial^{\alpha}N}{\partial
x_0^{p_0}\partial x_1^{p_1}\partial x_2^{p_2}\partial
x_3^{p_3}}\frac{\partial^{\gamma}N}{\partial x_0^{q_0}
\partial x_1^{q_1}\partial x_2^{q_2}\partial
x_3^{q_3}}dx_1dx_2dx_3
\\=&\int_{\mathbb{R}^3}\Big(\frac{\partial^{\alpha}N}{\partial
x_0^{p_0}\partial x_1^{p_1}\partial x_2^{p_2}\partial
x_3^{p_3}}\Big)^{\widehat{}}(x_1,x_2,x_3)
\overline{\Big(\frac{\partial^{\gamma}N}{\partial x_0^{q_0}\partial
x_1^{q_1}\partial x_2^{q_2}\partial x_3^{q_3}}
\Big)^{\widehat{}}(x_1,x_2,x_3)}dx_1dx_2dx_3\\=&\int_{\mathbb{R}^3}(2\pi
i)^{p_1+p_2+p_3}x_1^{p_1}x_2^{p_2}x_3^{p_3}\frac{\partial^{p_0}
\widehat{N}}{\partial x_0^{p_0}}(-2\pi
i)^{q_1+q_2+q_3}x_1^{q_1}x_2^{q_2}x_3^{q_3}
\overline{\frac{\partial^{q_0}\widehat{N}}{\partial
x_0^{q_0}}}dx_1dx_2dx_3,
\end{align*} and
\begin{align*}
\widehat{N}(x_1,x_2,x_3)&=\int_{\mathbb{R}^3}\frac{e^{-2\pi
i(x_1\xi_1+x_2\xi_2+x_3\xi_3)}}{x_0^2+\xi_1^2+\xi_2^2+ \xi_3^2}d\xi
\\&=\int_{\mathbb{R}^3}\frac{e^{-2\pi i\xi_1
\sqrt{x_1^2+x_2^2+x_3^2}}}{x_0^2+\xi_1^2+\xi_2^2+ \xi_3^2}d\xi
\\&=\int_{\mathbb{R}^2}\Big(\int_\mathbb{R}\frac{\cos(2\pi
\xi_1\sqrt{x_1^2+x_2^2+x_3^2})}{x_0^2+
\xi_1^2+\xi_2^2+\xi_3^2}d\xi_1\Big)d\xi_2d\xi_3
\\&=\int_{\mathbb{R}^2}\frac{\pi}{\sqrt{x_0^2+\xi_2^2+\xi_3^2}}e^{-2\pi
\sqrt{(x_1^2+x_2^2+x_3^2)(x_0^2+\xi_2^2+\xi_3^2)}}d\xi_2d\xi_3
\\&=\frac{\pi}{\sqrt{x_1^2+x_2^2+x_3^2}}e^{-2\pi
x_0\sqrt{x_1^2+x_2^2+x_3^2}},
\end{align*}
the proposition follows.
\end{proof}
\begin{prop}
\label{p2}Suppose $a>0$, denote $l=l_0+l_1+l_2+l_3$, we have
\begin{align*}
&\int_{\mathbb{R}^3}(x_1^2+x_2^2+x_3^2)^{\frac{l_0}{2}}
x_1^{l_1}x_2^{l_2}x_3^{l_3}
e^{-a\sqrt{x_1^2+x_2^2+x_3^2}}dx_1dx_2dx_3
\\=&\left\{\begin{aligned}
&2a^{-l-3}\Gamma(l+3)\frac{\Gamma(k_1+\frac{1}{2})
\Gamma(k_2+\frac{1}{2})\Gamma(k_3+\frac{1}{2})}
{\Gamma(k_1+k_2+k_3+\frac{3}{2})},&
&\mbox{if}~l_1=2k_1,l_2=2k_2,l_3=2k_3,\\
&0,& &\mbox{else}.
\end{aligned}\right.
\end{align*}
\end{prop}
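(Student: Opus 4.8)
The plan is to pass to spherical coordinates in $\mathbb{R}^3$, factor the integrand into a radial and two angular pieces, and recognize each factor as a Gamma (Beta) integral. First I would dispose of the ``else'' case by a parity argument: the weight $(x_1^2+x_2^2+x_3^2)^{l_0/2}$ and the exponential $e^{-a\sqrt{x_1^2+x_2^2+x_3^2}}$ are even in each of $x_1,x_2,x_3$ separately, so if some exponent $l_j$ ($j\in\{1,2,3\}$) is odd then the full integrand is odd in $x_j$ and the integral is $0$. Hence only the case $l_1=2k_1$, $l_2=2k_2$, $l_3=2k_3$ can contribute.

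Next, writing $x_1=r\sin\theta\cos\phi$, $x_2=r\sin\theta\sin\phi$, $x_3=r\cos\theta$ with volume element $r^2\sin\theta\,dr\,d\theta\,d\phi$, the integrand separates as
$$r^{l+2}e^{-ar}\,(\sin\theta)^{l_1+l_2+1}(\cos\theta)^{l_3}\,(\cos\phi)^{l_1}(\sin\phi)^{l_2},$$
so the triple integral becomes a product of three one-dimensional integrals. The radial factor is the elementary Gamma integral $\int_0^\infty r^{l+2}e^{-ar}\,dr=a^{-l-3}\Gamma(l+3)$.

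The two angular integrals I would evaluate from the Beta identity $\int_0^{\pi/2}(\sin\psi)^{2a-1}(\cos\psi)^{2b-1}\,d\psi=\tfrac12 B(a,b)=\frac{\Gamma(a)\Gamma(b)}{2\Gamma(a+b)}$, after exploiting the symmetries of even powers to reduce $[0,2\pi]$ and $[0,\pi]$ to $[0,\pi/2]$. This gives
\begin{align*}
\int_0^{2\pi}(\cos\phi)^{2k_1}(\sin\phi)^{2k_2}\,d\phi&=2\,\frac{\Gamma(k_1+\tfrac12)\Gamma(k_2+\tfrac12)}{\Gamma(k_1+k_2+1)},\\
\int_0^{\pi}(\sin\theta)^{2k_1+2k_2+1}(\cos\theta)^{2k_3}\,d\theta&=\frac{\Gamma(k_1+k_2+1)\Gamma(k_3+\tfrac12)}{\Gamma(k_1+k_2+k_3+\tfrac32)}.
\end{align*}
Multiplying the radial factor by these two, the common $\Gamma(k_1+k_2+1)$ cancels and leaves exactly the asserted value.

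There is no conceptual obstacle here; the computation is routine once the coordinate change is set up. The only point demanding care is the bookkeeping of the half-integer Beta parameters and the symmetry factors of $2$ picked up when folding the angular ranges onto $[0,\pi/2]$ --- it is precisely the correct tracking of these constants that produces the cancellation of $\Gamma(k_1+k_2+1)$ and the stated normalization $2a^{-l-3}\Gamma(l+3)$.
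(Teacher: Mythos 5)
Your proposal is correct and follows essentially the same route as the paper: parity to kill the odd cases, then spherical coordinates splitting the integral into a radial Gamma integral and two angular Beta integrals whose $\Gamma$ factors combine to the stated constant. The only difference is an immaterial relabeling of which axis is polar (the paper takes $x_1=r\cos\theta$, you take $x_3=r\cos\theta$), and your angular evaluations and the resulting cancellation are accurate.
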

\begin{proof}
Invoking the spherical coordinates, when $l_1=2k_1$, $l_2=2k_2$,
$l_3=2k_3$, we have
\begin{align*}
&\int_{R^3}(x_1^2+x_2^2+x_3^2)^{\frac{l_0}{2}}
x_1^{l_1}x_2^{l_2}x_3^{l_3}
e^{-a\sqrt{x_1^2+x_2^2+x_3^2}}dx_1dx_2dx_3
\\=&\int_0^{\infty}r^{l+2}e^{-ar}dr\int_0^\pi
{\cos}^{2k_1}\theta{\sin}^{2k_2+2k_3+1}\theta d\theta\int_0^{2\pi}
{\cos}^{2k_2}\varphi {\sin}^{2k_3}\varphi
d\varphi\\=&a^{-l-3}\Gamma(l+3)
\frac{\Gamma(k_1+\frac{1}{2})\Gamma(k_2+k_3+1)}
{\Gamma(k_1+k_2+k_3+\frac{3}{2})}\frac{2\Gamma(k_2+\frac{1}{2})
\Gamma(k_3+\frac{1}{2})}{\Gamma(k_2+k_3+1)}\\=&
2a^{-l-3}\Gamma(l+3)\frac{\Gamma(k_1+\frac{1}{2})
\Gamma(k_2+\frac{1}{2})\Gamma(k_3+\frac{1}{2})}
{\Gamma(k_1+k_2+k_3+\frac{3}{2})}.
\end{align*}

Otherwise, if, for example, $l_1$ is odd, then the integrand is odd
with respect to the variable $x_1$, so the integral vanishes.
\end{proof}

Now we turn to the calculation of the coefficients. First we note
that
$$E(\nu)=-\frac{1}{4\pi^2}\overline{D}N=
-\frac{1}{4\pi^2}\Big(\frac{\partial N}{\partial x_0}-\frac{\partial
N} {\partial x_1}e_1- \frac{\partial N}{\partial
x_2}e_2-\frac{\partial N}{\partial x_3}e_3\Big),$$ we may rewrite
$s(\nu)$ as
\begin{align*}
s(\nu)=\sum_{s_0+s_1+s_2=2n}\Big(&\frac{\partial^{2n+1}N} {\partial
x_0^{s_0+1}\partial x_1^{s_1}\partial
x_2^{s_2}}-\frac{\partial^{2n+1}N}{\partial x_0^{s_0}\partial
x_1^{s_1+1}\partial x_2^{s_2}}e_1-\frac{\partial^{2n+1}N}{\partial
x_0^{s_0}\partial x_1^{s_1}\partial x_2^{s_2+1}}e_2\\
&-\frac{\partial^{2n+1}N}{\partial x_0^{s_0}
\partial x_1^{s_1}\partial x_2^{s_2}\partial
x_3}e_3\Big)c(s_0,s_1,s_2),
\end{align*}
where
$$c(s_0,s_1,s_2)=c_0(s_0,s_1,s_2)+c_1(s_0,s_1,s_2)e_1+c_2
(s_0,s_1,s_2)e_2+c_3(s_0,s_1,s_2)e_3$$ are to be
determined. Set
\begin{align*}
F_\lambda(q',q_{n+1})=\Big(&\frac{\partial^{\lambda+1}N}{\partial
x_0^{t_0+1}\partial x_1^{t_1}\partial x_2^{t_2}\partial
x_3^{t_3}}-\frac{\partial^{\lambda+1}N}{\partial x_0^{t_0}\partial
x_1^{t_1+1}\partial x_2^{t_2}\partial x_3^{t_3}}e_1\\
&-\frac{\partial^{\lambda+1}N}{\partial x_0^{t_0}\partial
x_1^{t_1}\partial x_2^{t_2+1}\partial
x_3^{t_3}}e_2-\frac{\partial^{\lambda+1}N}{\partial
x_0^{t_0}\partial x_1^{t_1}\partial x_2^{t_2}\partial
x_3^{t_3+1}}e_3\Big)\Big|_{\nu=1+q_{n+1}},
\end{align*}
where $\lambda=t_0+t_1+t_2+t_3$, then one can verify that
$F_\lambda(q',q_{n+1})\in \mathcal {H}^2(\mathcal{U}_n)$ whenever
$\lambda>(2n-3)/2$. By the integral representation, we have

$$
F_\lambda(0,1)=\int_{\partial\mathcal{U}_n}
S(q,\omega)|_{q=(0,1)}F_\lambda^b(\omega)d\beta(\omega)
=\int_{\partial\mathcal{U}_n}\overline{s(1+\omega_{n+1})}
F_\lambda(\omega)d\beta(\omega)
$$
$$
=\sum_{s_0+s_1+s_2=2n}\big(c_0(s_0,s_1,s_2)
-c_1(s_0,s_1,s_2)e_1-c_2(s_0,s_1,s_2)e_2 -c_3(s_0,s_1,s_2)e_3\big)
$$
$$\times\bigg(\int_{\mbox{\scriptsize{Re}}\,\omega_{n+1}=|\omega'|^2}
\Big(\frac{\partial^{2n+1}N}{\partial x_0^{s_0+1}\partial
x_1^{s_1}\partial x_2^{s_2}}\frac{\partial^{\lambda+1}N}{\partial
x_0^{t_0+1}\partial x_1^{t_1}\partial x_2^{t_2}\partial x_3^{t_3}}+
\frac{\partial^{2n+1}N}{\partial x_0^{s_0}\partial
x_1^{s_1+1}\partial x_2^{s_2}}\frac{\partial^{\lambda+1}N}{\partial
x_0^{t_0}\partial x_1^{t_1+1}\partial x_2^{t_2}\partial x_3^{t_3}}$$
$$+\frac{\partial^{2n+1}N}{\partial x_0^{s_0}\partial
x_1^{s_1}\partial x_2^{s_2+1}}\frac{\partial^{\lambda+1}N}{\partial
x_0^{t_0}\partial x_1^{t_1}\partial x_2^{t_2+1}\partial
x_3^{t_3}}+\frac{\partial^{2n+1}N}{\partial x_0^{s_0}\partial
x_1^{s_1}\partial x_2^{s_2}\partial
x_3}\frac{\partial^{\lambda+1}N}{\partial x_0^{t_0}\partial
x_1^{t_1}\partial x_2^{t_2}\partial
x_3^{t_3+1}}\Big)\Big|_{\nu=1+\omega_{n+1}}d\beta(\omega)$$

$$+\int_{\mbox{\scriptsize{Re}}\,\omega_{n+1}=
|\omega'|^2}\Big(-\frac{\partial^{2n+1}N}{\partial x_0^{s_0+1}
\partial x_1^{s_1}\partial
x_2^{s_2}}\frac{\partial^{\lambda+1}N}{\partial x_0^{t_0}\partial
x_1^{t_1+1}\partial x_2^{t_2}\partial
x_3^{t_3}}+\frac{\partial^{2n+1}N}{\partial x_0^{s_0}\partial
x_1^{s_1+1}\partial x_2^{s_2}}\frac{\partial^{\lambda+1}N}{\partial
x_0^{t_0+1}\partial x_1^{t_1}\partial x_2^{t_2}\partial x_3^{t_3}}$$
$$+\frac{\partial^{2n+1}N}{\partial x_0^{s_0}\partial
x_1^{s_1}\partial x_2^{s_2}\partial x_3}
\frac{\partial^{\lambda+1}N}{\partial x_0^{t_0}\partial
x_1^{t_1}\partial x_2^{t_2+1}\partial
x_3^{t_3}}-\frac{\partial^{2n+1}N}{\partial x_0^{s_0}\partial
x_1^{s_1}\partial x_2^{s_2+1}}\frac{\partial^{\lambda+1}N}{\partial
x_0^{t_0}\partial x_1^{t_1}\partial x_2^{t_2}\partial
x_3^{t_3+1}}\Big)\Big|_{\nu=1+\omega_{n+1}}d\beta(\omega)e_1$$

$$+\int_{\mbox{\scriptsize{Re}}\,\omega_{n+1}=
|\omega'|^2}\Big(-\frac{\partial^{2n+1}N}{\partial x_0^{s_0+1}
\partial x_1^{s_1}\partial
x_2^{s_2}}\frac{\partial^{\lambda+1}N}{\partial x_0^{t_0}\partial
x_1^{t_1}\partial x_2^{t_2+1}\partial
x_3^{t_3}}+\frac{\partial^{2n+1}N}{\partial x_0^{s_0}\partial
x_1^{s_1}\partial x_2^{s_2+1}}\frac{\partial^{\lambda+1}N}{\partial
x_0^{t_0+1}\partial x_1^{t_1}\partial x_2^{t_2}\partial x_3^{t_3}}$$
$$+\frac{\partial^{2n+1}N}{\partial x_0^{s_0}\partial
x_1^{s_1+1}\partial x_2^{s_2}} \frac{\partial^{\lambda+1}N}{\partial
x_0^{t_0}\partial x_1^{t_1}\partial x_2^{t_2}\partial x_3^{t_3+1}}
-\frac{\partial^{2n+1}N}{\partial x_0^{s_0}\partial
x_1^{s_1}\partial x_2^{s_2}\partial
x_3}\frac{\partial^{\lambda+1}N}{\partial x_0^{t_0}\partial
x_1^{t_1+1}\partial x_2^{t_2}\partial
x_3^{t_3}}\Big)\Big|_{\nu=1+\omega_{n+1}}d\beta(\omega)e_2$$

$$+\int_{\mbox{\scriptsize{Re}}\,\omega_{n+1}=
|\omega'|^2}\Big(-\frac{\partial^{2n+1}N}{\partial x_0^{s_0+1}
\partial x_1^{s_1}\partial
x_2^{s_2}}\frac{\partial^{\lambda+1}N}{\partial x_0^{t_0}\partial
x_1^{t_1}\partial x_2^{t_2}\partial
x_3^{t_3+1}}+\frac{\partial^{2n+1}N}{\partial x_0^{s_0}\partial
x_1^{s_1}\partial x_2^{s_2}\partial
x_3}\frac{\partial^{\lambda+1}N}{\partial x_0^{t_0+1}\partial
x_1^{t_1}\partial x_2^{t_2}\partial x_3^{t_3}}$$
$$+\frac{\partial^{2n+1}N}{\partial x_0^{s_0}
\partial x_1^{s_1}\partial x_2^{s_2+1}}
\frac{\partial^{\lambda+1}N}{\partial x_0^{t_0}\partial
x_1^{t_1+1}\partial x_2^{t_2}\partial
x_3^{t_3}}-\frac{\partial^{2n+1}N}{\partial x_0^{s_0}\partial
x_1^{s_1+1}\partial x_2^{s_2}}\frac{\partial^{\lambda+1}N}{\partial
x_0^{t_0}\partial x_1^{t_1}\partial x_2^{t_2+1}\partial
x_3^{t_3}}\Big)\Big|_{\nu=1+\omega_{n+1}}d\beta(\omega)e_3\bigg).$$
By Proposition \ref{p1},
\begin{align}
F_\lambda(0,1)=\sum_{s_0+s_1+s_2=2n}\big(&c_0(s_0,s_1,s_2)
-c_1(s_0,s_1,s_2)e_1-c_2(s_0,s_1,s_2)e_2
-c_3(s_0,s_1,s_2)e_3\big)\nonumber
\\
\times\bigg(&\int_{\mathbb{H}^n}\Big(\int_{\mathbb{R}^3}2^{2n+\lambda+3}
\pi^{2n+\lambda+4}(-1)^{s_0+\lambda}i^{2n+\lambda-s_0-t_0}
(x_1^2+x_2^2+x_3^2)^{\frac{s_0+t_0}{2}}\nonumber
\\&
\times x_1^{s_1+t_1}x_2^{s_2+t_2}x_3^{t_3}e^{-4\pi
x_0\sqrt{x_1^2+x_2^2+x_3^2}}dx_1dx_2dx_3\Big)
\Big|_{x_0=1+|\omega'|^2}d\omega'\nonumber
\\+&\int_{\mathbb{H}^n}\Big(\int_{\mathbb{R}^3}2^{2n+\lambda+3}\pi^{2n+\lambda+4}
(-1)^{s_0+\lambda+1}i^{2n+\lambda+1-s_0-t_0}
(x_1^2+x_2^2+x_3^2)^{\frac{s_0+t_0-1}{2}}\nonumber
\\& \times x_1^{s_1+t_1+1}x_2^{s_2+t_2}x_3^{t_3}e^{-4\pi
x_0\sqrt{x_1^2+x_2^2+x_3^2}}dx_1dx_2dx_3\Big)
\Big|_{x_0=1+|\omega'|^2}d\omega'e_1\nonumber
\\+&\int_{\mathbb{H}^n}\Big(\int_{\mathbb{R}^3}2^{2n+\lambda+3}\pi^{2n+\lambda+4}
(-1)^{s_0+\lambda+1}i^{2n+\lambda+1-s_0-t_0}
(x_1^2+x_2^2+x_3^2)^{\frac{s_0+t_0-1}{2}}\nonumber
\\& \times x_1^{s_1+t_1}x_2^{s_2+t_2+1}x_3^{t_3}e^{-4\pi
x_0\sqrt{x_1^2+x_2^2+x_3^2}}dx_1dx_2dx_3\Big)
\Big|_{x_0=1+|\omega'|^2}d\omega'e_2\nonumber
\\+&\int_{\mathbb{H}^n}\Big(\int_{\mathbb{R}^3}2^{2n+\lambda+3}\pi^{2n+\lambda+4}
(-1)^{s_0+\lambda+1}i^{2n+\lambda+1-s_0-t_0}
(x_1^2+x_2^2+x_3^2)^{\frac{s_0+t_0-1}{2}}\nonumber
\\&
\times x_1^{s_1+t_1}x_2^{s_2+t_2}x_3^{t_3+1}e^{-4\pi
x_0\sqrt{x_1^2+x_2^2+x_3^2}}dx_1dx_2dx_3\Big)\Big|_{x_0=1+
|\omega'|^2}d\omega'e_3\bigg).\label{sys}
\end{align}

When $t_i$ ($0\leq i\leq 3$) vary, we will get a system of linear
equations in variables $c_i(s_0,s_1,s_2)$ ($0\leq i\leq 3,
s_0+s_1+s_2=2n$). Now, we solve this system.

Taking $t_1=2q_1$, $t_2=2q_2$, $t_3=2q_3+1$, one can prove by
induction that
\begin{align*}
F_\lambda(0,1)&=(-1)^{t_0+q_1+q_2+q_3}
2^{-\lambda}\frac{\Gamma(\lambda+3)\Gamma(q_1+q_2+q_3+2)\Gamma(2q_1)
\Gamma(2q_2)\Gamma(2q_3+2)}{\Gamma(2q_1+2q_2+2q_3+4)
\Gamma(q_1)\Gamma(q_2)\Gamma(q_3+1)}e_3\\&=(-1)^{t_0+q_1+q_2+q_3}
2^{-\lambda-4}\pi^{-1}\frac{\Gamma(\lambda+3)
\Gamma(q_1+\frac{1}{2})\Gamma(q_2+\frac{1}{2})\Gamma(q_3+\frac{3}{2})}
{\Gamma(q_1+q_2+q_3+\frac{5}{2})}e_3,
\end{align*}
here the formula $\Gamma(x)\Gamma(x+\frac{1}{2})=2^{1-2x}
\Gamma(\frac{1}{2})\Gamma(2x)$ is used. Applying Proposition
\ref{p2} to the right hand side of system (\ref{sys}), we get
\begin{align}
&\sum_{p_0+p_1+p_2=n}\big(c_0(2p_0,2p_1,2p_2)
-c_1(2p_0,2p_1,2p_2)e_1-c_2(2p_0,2p_1,2p_2)e_2
-c_3(2p_0,2p_1,2p_2)e_3\big)\nonumber\\&\times
(-1)^{t_0+q_1+q_2+q_3+n+p_0+1}2^{-2n-\lambda-2}\pi^{2n+1}
\frac{\Gamma(\lambda+3)
\Gamma(p_1+q_1+\frac{1}{2})\Gamma(p_2+q_2+\frac{1}{2})
\Gamma(q_3+\frac{3}{2})} {\Gamma(q_1+q_2+q_3+n-p_0+\frac{5}{2})}e_3
\nonumber\\=&F_\lambda(0,1)=(-1)^{t_0+q_1+q_2+q_3}
2^{-\lambda-4}\pi^{-1}\frac{\Gamma(\lambda+3)
\Gamma(q_1+\frac{1}{2})\Gamma(q_2+\frac{1}{2})\Gamma(q_3+\frac{3}{2})}
{\Gamma(q_1+q_2+q_3+\frac{5}{2})}e_3.
\end{align}
Hence,
\begin{align}\label{eq1}
\sum_{p_0+p_1+p_2=n}(-1)^{p_0}\frac{\Gamma(p_1+q_1+\frac{1}{2})
\Gamma(p_2+q_2+\frac{1}{2})}
{\Gamma(q_1+q_2+q_3+n-p_0+\frac{5}{2})}c_i(2p_0,2p_1,2p_2)=0,~1\leq
i\leq 3,
\end{align} and
\begin{align}\label{eq2}
\sum_{p_0+p_1+p_2=n}(-1)^{n+p_0+1} \frac{\Gamma(p_1+q_1+\frac{1}{2})
\Gamma(p_2+q_2+\frac{1}{2})}{\Gamma(q_1+q_2+q_3+n-p_0+\frac{5}{2})}
c_0(2p_0,2p_1,2p_2)=
\frac{2^{2n-2}}{\pi^{2n+2}}\frac{\Gamma(q_1+\frac{1}{2})
\Gamma(q_2+\frac{1}{2})}{\Gamma(q_1+q_2+q_3+\frac{5}{2})}.
\end{align}

While taking $t_1=2q_1+1$, $t_2=2q_2$, $t_3=2q_3+1$, we will get
\begin{align}
&\sum_{p_0+p_1+p_2=n-1}\overline{c(2p_0+1,2p_1+1,2p_2)}
(-1)^{t_0+q_1+q_2+q_3+n+p_0+1}2^{-2n-\lambda-2}\pi^{2n+1}\nonumber\\
&\times\frac{\Gamma(\lambda+3)\Gamma(p_1+q_1+\frac{3}{2})
\Gamma(p_2+q_2+\frac{1}{2})\Gamma(q_3+\frac{3}{2})}
{\Gamma(q_1+q_2+q_3+n-p_0+\frac{5}{2})}e_3
=F_\lambda(0,1)=0.
\end{align}
Hence,
\begin{align}\label{eq3}
\sum_{p_0+p_1+p_2=n-1}(-1)^{p_0}\frac{\Gamma(p_1+q_1+\frac{3}{2})
\Gamma(p_2+q_2+\frac{1}{2})}
{\Gamma(q_1+q_2+q_3+n-p_0+\frac{5}{2})}c_i(2p_0+1,2p_1+1,2p_2)=0,~0\leq
i\leq 3.
\end{align}

Similarly, taking $t_1=2q_1$, $t_2=2q_2+1$, $t_3=2q_3+1$, we get
\begin{align}
&\sum_{p_0+p_1+p_2=n-1}\overline{c(2p_0+1,2p_1,2p_2+1)}
(-1)^{t_0+q_1+q_2+q_3+n+p_0+1}2^{-2n-\lambda-2}\pi^{2n+1}\nonumber\\
&\times\frac{\Gamma(\lambda+3)\Gamma(p_1+q_1+\frac{1}{2})
\Gamma(p_2+q_2+\frac{3}{2})\Gamma(q_3+\frac{3}{2})}
{\Gamma(q_1+q_2+q_3+n-p_0+\frac{5}{2})}e_3 =F_\lambda(0,1)=0,
\end{align}
which gives
\begin{align}\label{eq4}
\sum_{p_0+p_1+p_2=n-1}(-1)^{p_0}\frac{\Gamma(p_1+q_1+\frac{1}{2})
\Gamma(p_2+q_2+\frac{3}{2})}
{\Gamma(q_1+q_2+q_3+n-p_0+\frac{5}{2})}c_i(2p_0+1,2p_1,2p_2+1)=0,~0\leq
i\leq 3.
\end{align}
And taking $t_1=2q_1+1$, $t_2=2q_2+1$, $t_3=2q_3+1$, we get
\begin{align}
&\sum_{p_0+p_1+p_2=n-1}\overline{c(2p_0,2p_1+1,2p_2+1)}
(-1)^{t_0+q_1+q_2+q_3+n+p_0}2^{-2n-\lambda-2}\pi^{2n+1}\nonumber\\
&\times\frac{\Gamma(\lambda+3)\Gamma(p_1+q_1+\frac{3}{2})
\Gamma(p_2+q_2+\frac{3}{2})\Gamma(q_3+\frac{3}{2})}
{\Gamma(q_1+q_2+q_3+n-p_0+\frac{7}{2})}e_3
=F_\lambda(0,1)=0,
\end{align} which gives
\begin{align}\label{eq5}
\sum_{p_0+p_1+p_2=n-1}(-1)^{p_0}\frac{\Gamma(p_1+q_1+\frac{3}{2})
\Gamma(p_2+q_2+\frac{3}{2})}
{\Gamma(q_1+q_2+q_3+n-p_0+\frac{7}{2})}c_i(2p_0,2p_1+1,2p_2+1)=0,~0\leq
i\leq 3.
\end{align}

Now we solve the equations (\ref{eq1}), (\ref{eq2}), (\ref{eq3}),
(\ref{eq4}) and (\ref{eq5}). We would like to deal with (\ref{eq2})
first. Multiplying (\ref{eq2}) by $\Gamma(q_1+q_2+q_3+\frac{5}{2})$
at both sides, one gets
\begin{align}
&\sum_{p_1+p_2=n}\frac{(-1)^{n+1}\Gamma(p_1+q_1+\frac{1}{2})
\Gamma(p_2+q_2+\frac{1}{2})c_0(0,2p_1,2p_2)}
{(q_1+q_2+q_3+n-1+\frac{5}{2})(q_1+q_2+q_3+n-2+\frac{5}{2})
\cdots(q_1+q_2+q_3+\frac{5}{2})}\nonumber\\
+&\sum_{p_1+p_2=n-1}\frac{(-1)^n\Gamma(p_1+q_1+\frac{1}{2})
\Gamma(p_2+q_2+\frac{1}{2})c_0(2,2p_1,2p_2)}
{(q_1+q_2+q_3+n-2+\frac{5}{2})(q_1+q_2+q_3+n-3+\frac{5}{2})
\cdots(q_1+q_2+q_3+\frac{5}{2})}\nonumber\\
+&\cdots+\sum_{p_1+p_2=1}\frac{\Gamma(p_1+q_1+\frac{1}{2})
\Gamma(p_2+q_2+\frac{1}{2})}
{q_1+q_2+q_3+\frac{5}{2}}c_0(2n-2,2p_1,2p_2)\nonumber\\
+&(-1)\Gamma(q_1+\frac{1}{2})\Gamma(q_2+\frac{1}{2})c_0(2n,0,0)=
\frac{2^{2n-2}}{\pi^{2n+2}}\Gamma(q_1+\frac{1}{2})
\Gamma(q_2+\frac{1}{2}).\label{eq6}
\end{align}
Fix $q_1$ and $q_2$, let $q_3\rightarrow\infty$, we immediately get
\begin{equation}\label{an1}
c_0(2n,0,0)=-2^{2n-2}/\pi^{2n+2}.
\end{equation}
Hence (\ref{eq6}) can be reduced to
\begin{align}
&\sum_{p_1+p_2=n}\frac{(-1)^{n+1}\Gamma(p_1+q_1+\frac{1}{2})
\Gamma(p_2+q_2+\frac{1}{2})c_0(0,2p_1,2p_2)}
{(q_1+q_2+q_3+n-1+\frac{5}{2})(q_1+q_2+q_3+n-2+\frac{5}{2})
\cdots(q_1+q_2+q_3+\frac{5}{2})}\nonumber\\
+&\sum_{p_1+p_2=n-1}\frac{(-1)^n\Gamma(p_1+q_1+\frac{1}{2})
\Gamma(p_2+q_2+\frac{1}{2})c_0(2,2p_1,2p_2)}
{(q_1+q_2+q_3+n-2+\frac{5}{2})(q_1+q_2+q_3+n-3+\frac{5}{2})
\cdots(q_1+q_2+q_3+\frac{5}{2})}\nonumber\\
+&\cdots+\sum_{p_1+p_2=1}\frac{\Gamma(p_1+q_1+\frac{1}{2})
\Gamma(p_2+q_2+\frac{1}{2})}
{q_1+q_2+q_3+\frac{5}{2}}c_0(2n-2,2p_1,2p_2)=0.\label{eq7}
\end{align}
Multiplying (\ref{eq7}) by $q_1+q_2+q_3+\frac{5}{2}$ and letting
$q_3\rightarrow\infty$, we get
\begin{equation}
\label{eq8}\Gamma(q_1+\frac{1}{2})
\Gamma(q_2+\frac{3}{2})c_0(2n-2,0,2)+
\Gamma(q_1+\frac{3}{2})\Gamma(q_2+\frac{1}{2})c_0(2n-2,2,0)=0.
\end{equation}
Dividing (\ref{eq8}) by $\Gamma(q_1+\frac{3}{2})$, we obtain
\begin{equation}\label{eq9}
\frac{\Gamma(q_2+\frac{3}{2})}{q_1+\frac{1}{2}}c_0(2n-2,0,2)+
\Gamma(q_2+\frac{1}{2})c_0(2n-2,2,0)=0.
\end{equation}
Now fix $q_2$, let $q_1\rightarrow\infty$, we immediately get
\begin{equation}
\label{an2}c_0(2n-2,2,0)=0,
\end{equation}
and consequently,
\begin{equation}
\label{an3}c_0(2n-2,0,2)=0.
\end{equation}
Repeating the above process, after (\ref{an3}) we will successively
obtain
\begin{equation}\label{an4}
c_0(2p_0,2p_1,2p_2)=0,\quad\forall~p_0\leq n-2,~p_0+p_1+p_2=n.
\end{equation}

Through similar discussions on (\ref{eq1}), (\ref{eq3}), (\ref{eq4})
and (\ref{eq5}), we can show that
\begin{align}
c_i(2p_0,2p_1,2p_2)&=0,\quad\forall~1\leq i\leq
3,~p_0+p_1+p_2=n,\label{an5}\\
c_i(2p_0+1,2p_1+1,2p_2)&=0,\quad\forall~0\leq i\leq
3,~p_0+p_1+p_2=n-1,\label{an6}\\
c_i(2p_0+1,2p_1,2p_2+1)&=0,\quad\forall~0\leq i\leq
3,~p_0+p_1+p_2=n-1,\label{an7}\\
c_i(2p_0,2p_1+1,2p_2+1)&=0,\quad\forall~0\leq i\leq
3,~p_0+p_1+p_2=n-1.\label{an8}
\end{align}

Combining (\ref{an1}), (\ref{an2})--(\ref{an8}), we have in fact
proved that
$$c(2n,0,0)=-2^{2n-2}/\pi^{2n+2},\quad
c(s_0,s_1,s_2)=0~(\mbox{other cases}).$$ So
$$s(\nu)=-2^{2n-2}/\pi^{2n+2}\frac{\partial^{2n}}{\partial
x_0^{2n}}(\overline{D}N)=(2/\pi)^{2n}\frac{\partial^{2n}}{\partial
x_0^{2n}}E(\nu).$$ The proof of Theorem \ref{cs} is complete.
\end{proof}

Now we can define the Cauchy--Szeg\"{o} projection operator
$\mathcal{C}$ in terms of the Cauchy--Szeg\"{o} kernel:
$$
(\mathcal{C}f)(q)=\lim_{\varepsilon\rightarrow 0}
\int_{\partial\mathcal{U}_n}S(q+\varepsilon\mathbf{e_0},
\omega)f(\omega)d\beta(\omega),\quad \forall f\in
L^2(\partial\mathcal{U}_n),~q\in\partial\mathcal{U}_n.
$$
i.e., $\mathcal{C}(f)$ is the boundary limit of some function in
$\mathcal{H}^2(\mathcal{U}_n),$ where the limit is taken in
$L^2(\partial\mathcal{U}_n)$ norm. $\mathcal{C}$ satisfies
$\mathcal{C}^2=\mathcal{C}=\mathcal{C}^\ast$, where
$\mathcal{C}^\ast$ is the adjoint operator of $\mathcal{C}$.

\vskip 0.2cm \noindent\textbf{Theorem 3.3.} \emph{$\mathcal{C}$ can
be extended to a bounded operator from $L^p(\partial\mathcal{U}_n)$
to $L^p(\partial\mathcal{U}_n),$ $1< p<\infty.$}
\renewcommand\proofname{Proof}
\begin{proof}
Represent $\mathcal{C}$ by the integral on
Lie group $\mathcal {Q}_n,$ i.e., consider the following
operator
$$
(\mathcal{C}f)(h)=\lim_{\varepsilon\rightarrow 0}
\int_{\mathcal{Q}_n}K_\varepsilon(g^{-1}\diamond
h)f(g)d\beta(g),\quad \forall f\in
L^2(\mathcal{Q}_n),~h\in\mathcal{Q}_n,
$$
where $K_\varepsilon(h)=S(h(0)+\varepsilon\mathbf{e_0},0)$,
$h=(q',t_1,t_2,t_3).$ Define the dilation on $\mathcal {Q}_n$ by
$\delta\circ(q',t_1,t_2,t_3)=(\delta
q',\delta^2t_1,\delta^2t_2,\delta^2t_3)$ $(\delta>0),$ and the
length of the element by
$\rho(h)=\max\{|q'|,|t_1|^{\frac{1}{2}},|t_2|^{\frac{1}{2}},
|t_3|^{\frac{1}{2}}\},$ respectively. Then $\mathcal {Q}_n$ becomes
a homogeneous group of dimension $d=4n+6$. Consider the distribution
$$K(h)=\lim_{\varepsilon\rightarrow
0}K_\varepsilon(h)=s(\nu)\big|_{\nu=|q'|^2+\mathbf{e}\cdot t},$$ and
write $q'=(y_1,y_2,\ldots,y_{4n}),$ then one can verify that
\begin{align*}
\big|K(h)\big|&\leq C\rho^{-d}(h),\\ \Big|\frac{\partial}{\partial
y_{i}}K(h)\Big|&\leq C\rho^{-d-1}(h), \quad 1\leq i\leq 4n,\\
\Big|\frac{\partial}{\partial t_{j}}K(h)\Big|&\leq C\rho^{-d-2}(h),
\quad 1\leq j\leq3.
\end{align*}

Because $\mathcal{C}$ is a projection operator, $\mathcal{C}$ must
be of type $(2, 2)$ and
$\|\mathcal{C}f\|_{L^2(\partial\mathcal{U}_n)}\leq
\|f\|_{L^2(\partial\mathcal{U}_n)}.$ Then one can finish the proof
by the theory of harmonic analysis on homogeneous groups (see, e.g.
\cite{Ste}).
\end{proof}

\vskip 0.16cm \noindent\textbf{Corollary 3.4.} \emph{The integral
operator
\begin{eqnarray*}
(Cf)(q)=\int_{\partial\mathcal{U}_n}S(q,\omega)f(\omega)
d\beta(\omega),\quad q\in \mathcal{U}_n
\end{eqnarray*}
is bounded from $L^p(\partial\mathcal{U}_n)$ to $\mathcal
{H}^p(\mathcal{U}_n)$ for $1<p<\infty.$}

\section{Octonionic Heisenberg group and Hardy spaces on the
octonionic Siegel half space}

Denote $\mathbb{B}$ and $\mathcal{U}$ the unit ball and Siegel half
space in $\mathbb{O}^2$ respectively, viz.,
$$
\mathbb{B}=\big\{(\sigma_1,\sigma_2)\in
\mathbb{O}^2:~|\sigma_1|^2+|\sigma_2|^2<1\big\},
$$
$$
\mathcal{U}=\big\{(\tau_1,\tau_2)\in \mathbb{O}^2:~
\mbox{Re}\,\tau_2>|\tau_1|^2\big\}.
$$
Then one can verify that the Cayley transform
\begin{eqnarray*}
\sigma_1=\frac{2\tau_1(1+\overline{\tau_2})}{|1+\tau_2|^2},\quad
\sigma_2=\frac{(1+\overline{\tau_2})(1-\tau_2)}{|1+\tau_2|^2}=
\frac{(1-\tau_2)(1+\overline{\tau_2})}{|1+\tau_2|^2}
\end{eqnarray*}
is a bijection from $\mathcal{U}$ to $\mathbb{B}$ with the inversion
being given by
\begin{eqnarray*}
\tau_1=\frac{\sigma_1(1+\overline{\sigma_2})}{|1+\sigma_2|^2},\quad
\tau_2=\frac{(1+\overline{\sigma_2})(1-\sigma_2)}{|1+\sigma_2|^2}=
\frac{(1-\sigma_2)(1+\overline{\sigma_2})}{|1+\sigma_2|^2}.
\end{eqnarray*}
Unlike the case of several complex variables, the above Cayley
transform is neither left $\mathbb{O}$-analytic not right
$\mathbb{O}$-analytic.

The boundary of $\mathcal{U}$ is denoted by
$\partial\mathcal{U}=\big\{(\tau_1,\tau_2)\in \mathbb{O}^2:~
\mbox{Re}\,\tau_2=|\tau_1|^2\big\}.$ We introduce three mappings on
$\mathcal {U}$ here: dilations, rotations and translations. Let
$\tau=(\tau_1,\tau_2)\in \mathbb{O}^2,$ for every positive number
$\delta,$ define the dilation $\delta\circ\tau$ as follows:
$$
\delta\circ\tau=\delta\circ(\tau_1,\tau_2)=(\delta\tau_1,\delta^2
\tau_2),
$$
it is non-isotropic due to the structure of $\mathcal{U}.$ For each
rotation $\mathcal{R}$ on $\mathbb{O},$ define the rotation
$\mathcal{R}(\tau)$ by
$$
\mathcal{R}(\tau)=\mathcal{R}(\tau_1,\tau_2)=
(\mathcal{R}(\tau_1),\tau_2).
$$
Obviously, both the dilation and rotation give self mappings of
$\mathcal{U}$ that can be extended to mappings on the boundary
$\partial\mathcal{U},$ but in general they are neither left
$\mathbb{O}$-analytic not right $\mathbb{O}$-analytic.

Before we describe the translations on $\mathcal{U},$ we introduce
the octonionic Heisenberg group, denoted by $\mathcal{O}.$ This
group consists of the set
$$
\mathbb{O}\times \mathbb{R}^7=\big\{[\omega,t]=[\omega,t_1, \ldots,
t_7]:~\omega \in \mathbb{O}, t=(t_1,\ldots,t_7) \in
\mathbb{R}^7\big\}
$$
with the multiplication law
\begin{eqnarray}\label{mul}
[\alpha,t]\diamond[\beta,s]=[\alpha+\beta, t_1+s_1+2\mbox{Im}_1\,
(\overline{\alpha}\beta),\ldots,t_7+s_7+2\mbox{Im}_7\,
(\overline{\alpha}\beta)],
\end{eqnarray}
which makes $\mathbb{O}\times \mathbb{R}^7$ into Lie group with the
neutral element $[0,0]$ and the inverse element of $[\omega,t]$
being given by $[\omega,t]^{-1}=[-\omega,-t].$

For each element $[\omega,t] \in \mathcal{O},$ we define the
translation on $\mathcal{U}$:
\begin{eqnarray}\label{5.2}
[\omega,t]:~(\tau_1,\tau_2)\mapsto(\tau_1+\omega,\tau_2+
|\omega|^2+2\overline{\omega}\tau_1+\mathbf{e}\cdot t),
\end{eqnarray}
where $\mathbf{e}=(e_1,\ldots,e_7)$, $\mathbf{e}\cdot t=
\sum_{i=1}^7e_it_i.$ This mapping preserves the function
$r(\tau)=\mbox{Re}\,\tau_2-|\tau_1|^2$, hence it maps
$\mathcal{U}=\{\tau: r(\tau)>0\}$ to itself and preserves the
boundary $\partial\mathcal{U}=\{\tau: r(\tau)=0\}.$

Besides, the reader can check that the mapping (\ref{5.2}) defines
an action of the group $\mathcal{O}$ on $\mathcal {U}$. Via this
action at the origin
$$
[\omega,t]:\,(0,0)\mapsto(\omega,|\omega|^2+\mathbf{e}\cdot t),
$$
we can identify $\mathcal{O}$ with $\partial\mathcal {U}:$ $
\mathcal{O}\ni[\omega,t]\mapsto(\omega,|\omega|^2+\mathbf{e}\cdot
t)\in \partial\mathcal{U}.$

Let $dh$ be the Haar measure on $\mathcal {O},$ using the
identification of $\partial\mathcal {U}$ with $\mathcal {O}$ we
introduce the measure $d\beta$ on $\partial\mathcal {U}$ by the
following integral identity:
$$
\int_{\partial\mathcal{U}}
F(\tau)d\beta(\tau)=\int_{\mathbb{O}\times \mathbb{R}^7}
F(\tau_1,|\tau_1|^2+\mathbf{e}\cdot t)d\tau_1dt,\quad \forall
F(\tau)\in C_c(\partial\mathcal{U}),
$$
where $C_c(\partial\mathcal{U})$ is the set of continuous functions
with compact support on $\partial\mathcal{U}$. With this measure we
can define the space $L^p(\mathcal {O})=L^p(\partial\mathcal
{U})~(0<p<\infty)$.

For any function $F(\tau)$ defined on $\mathcal {U},$
$F_\varepsilon(\tau)=F(\tau+\varepsilon\mathbf{e_0})$ is called the
vertical translate of $F(\tau),$ here
$\mathbf{e_0}=(0,0,0,0,0,0,0,0,1,0,0,0,0,0,0,0).$ Obviously, if
$\varepsilon>0,$ then $F_\varepsilon(\tau)$ is well defined in some
neighborhood of $\partial\mathcal{U}.$ In particular,
$F_\varepsilon(\tau)$ is well defined on $\partial\mathcal{U}.$ For
$\frac{6}{7}<p<\infty,$ we define the Hardy space
$\mathcal{H}^p(\mathcal{U})$ on the octonionic Siegel half space to
be the set of functions $F(\tau)$ which are left
$\mathbb{O}$-analytic on $\mathcal{U}$ with respect to $\tau_1,$
$\tau_2$ respectively, and satisfy
$$
\|F\|_{\mathcal{H}^p(\mathcal{U})}:=\Big(\sup_{\varepsilon>0}\int
_{\partial\mathcal{U}}
|F_\varepsilon(\tau)|^pd\beta(\tau)\Big)^{1/p}<\infty.
$$

$\mathcal{H}^p(\mathcal{U})$ is a Banach space when $1\leq
p<\infty,$ a Fr\'{e}chet space when $\frac{6}{7}<p<1.$ In
particular, $\mathcal{H}^2(\mathcal{U})$ becomes a real-linear
Hilbert space with the inner product $\langle\cdot, \cdot\rangle$
being defined by (it will be seen later that the definition is
reasonable):
$$\langle F, G\rangle=\lim_{\varepsilon\rightarrow
0}\int_{\partial\mathcal{U}}
\overline{G_\varepsilon(\tau)}F_\varepsilon(\tau)d\beta(\tau),
\quad\forall F, G\in \mathcal {H}^2(\mathcal {U}).$$

Our main result concerning $\mathcal{H}^p(\mathcal{U})$ is:

\begin{theo}\label{o1}
Suppose $F(\tau)\in\mathcal{H}^p(\mathcal{U})$
$(\frac{6}{7}<p<\infty),$ then
\begin{enumerate}[(i)]
\item  There exists an $F^b\in
L^p(\partial\mathcal{U}),$ such that
$F_\varepsilon(\tau)|_{\partial\mathcal{U}}\rightarrow F^b$
$(\varepsilon\rightarrow 0)$ in the sense of
$L^p(\partial\mathcal{U})$ norm and almost everywhere.

\item $\{F^b\}$ is closed subspace of $L^p(\partial\mathcal {U}).$
Moreover,

\item $\|F^b\|_{L^p(\partial\mathcal
{U})}=\|F\|_{\mathcal{H}^p(\mathcal{U})}.$
\end{enumerate}
\end{theo}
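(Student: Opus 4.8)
The plan is to transplant Stein's $H^p$-theory on the Siegel domain---the same machinery invoked for parts (i)--(ii) of Theorem~\ref{cs}---to the octonionic half space $\mathcal{U}\subset\mathbb{O}^2$. The three automorphism families already introduced (the non-isotropic dilations $\delta\circ\tau$, the rotations $\mathcal{R}(\tau)$, and the octonionic Heisenberg group $\mathcal{O}$ acting by the measure-preserving translations \eqref{5.2}) furnish exactly the homogeneous-group structure on $\partial\mathcal{U}$ that drives the classical argument. The entire weight of the restriction $\frac67<p<\infty$ falls on a single analytic input, the subharmonicity of $|F|^p$, after which (i), (ii), (iii) follow by formal harmonic analysis on $\mathcal{O}$ exactly as in \cite{Ste}.

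First I would establish that $|F|^{p}$ is subharmonic on $\mathcal{U}$, regarded as a domain in $\mathbb{R}^{16}$. For fixed $\tau_2$ the slice $\tau_1\mapsto F(\tau_1,\tau_2)$ is left $\mathbb{O}$-analytic, i.e.\ lies in the kernel of the octonionic Dirac operator on $\mathbb{R}^8$; by the sharp subharmonicity threshold for Dirac-null functions in $\mathbb{R}^m$, namely $p\ge\frac{m-2}{m-1}$, one gets $\Delta_{\tau_1}|F|^{p}\ge0$ for $p\ge\frac{8-2}{8-1}=\frac67$, and symmetrically $\Delta_{\tau_2}|F|^{p}\ge0$. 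Since $\Delta_{\mathbb{R}^{16}}=\Delta_{\tau_1}+\Delta_{\tau_2}$, slicewise subharmonicity yields joint subharmonicity, which is precisely where the hypothesis $p>\frac67$ enters.

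Granting this, the remaining steps are standard. Using the sub-mean-value inequality together with the dilation and translation invariance of $d\beta$, I would derive a pointwise majorization of $|F_\varepsilon(\tau)|$ by $C\,\varepsilon^{-\gamma}\|F\|_{\mathcal{H}^p(\mathcal{U})}$ for some $\gamma>0$ depending on $p$ and on the homogeneous dimension $Q=8+2\cdot7=22$ of $\mathcal{O}$; this forces $\mathcal{H}^p(\mathcal{U})$-convergence to imply local uniform convergence, so that $\mathcal{H}^p(\mathcal{U})$ is complete and its elements stay $\mathbb{O}$-analytic in the limit. Next, choosing an exponent $\frac67<p_0<p$, the subharmonic $|F|^{p_0}$ is dominated by a Poisson-type integral of its boundary trace, whence the nontangential maximal function $F^{*}$ is controlled by the Hardy--Littlewood maximal operator of $\mathcal{O}$ applied to a boundary slice; as $p/p_0>1$ this operator is bounded on $L^{p/p_0}(\mathcal{O})$, giving $F^{*}\in L^{p}(\partial\mathcal{U})$ and hence the a.e.\ boundary limit $F^b$, and dominated convergence against $F^{*}$ upgrades this to $L^{p}$ convergence, proving (i). For (iii), subharmonicity makes $\varepsilon\mapsto\int_{\partial\mathcal U}|F_\varepsilon|^{p}\,d\beta$ monotone, so the supremum equals $\lim_{\varepsilon\to0}$, which by the $L^p$ convergence is $\|F^b\|_{L^p(\partial\mathcal U)}^{p}$. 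Finally (ii) follows from the isometry (iii): if $F^b_k\to g$ in $L^p$, then $(F_k)$ is Cauchy in $\mathcal{H}^p(\mathcal{U})$, hence converges locally uniformly to some $\mathbb{O}$-analytic $F\in\mathcal{H}^p(\mathcal{U})$ with $F^b=g$, so $\{F^b\}$ is closed.

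The main obstacle is the subharmonicity step. The octonions form a non-associative algebra and are \emph{not} a Clifford algebra, so the subharmonicity results of Clifford analysis do not transfer verbatim; one must instead verify the pointwise inequality $\Delta|F|^{p}\ge0$ directly from the octonionic Cauchy--Riemann (Dirac) equation, carefully tracking the associator contributions that are absent in the associative setting. Once the sharp threshold $p\ge\frac67$ is secured, every subsequent step is insensitive to associativity, being pure harmonic analysis on the homogeneous group $\mathcal{O}$, in exact parallel with \cite{Ste} and with parts (i)--(ii) of Theorem~\ref{cs}.
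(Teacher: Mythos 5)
Your outline gets the analytic backbone right (the sharp subharmonicity exponent $\tfrac{m-2}{m-1}=\tfrac67$, a maximal inequality, a.e.\ and $L^p$ limits, the isometry, completeness via a local sup bound), but it misidentifies where the real difficulty lies, and the step you declare routine is precisely the one the paper shows cannot be done in the standard way. You treat subharmonicity as the main obstacle; in fact that is already in the literature (the paper simply quotes it as Lemma 4.2, from the reference on subharmonicity of powers of octonion-valued monogenic functions). The genuine obstruction is your claim that, once subharmonicity is secured, ``every subsequent step is insensitive to associativity, being pure harmonic analysis on the homogeneous group $\mathcal{O}$, in exact parallel with \cite{Ste}.'' Stein's argument for Siegel domains transports estimates from one boundary point to all of $\partial\mathcal{U}$ by means of the Heisenberg-group translations, which in the complex and quaternionic cases preserve the function class. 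In the octonionic case the translation \eqref{5.2} involves left multiplication of $\tau_1$ by $\overline{\omega}$, and composing a left $\mathbb{O}$-analytic function with $\tau_1\mapsto\alpha\tau_1$ does \emph{not} in general preserve left $\mathbb{O}$-analyticity --- this is exactly the content of the paper's Remark after Lemma 4.4 and of Propositions 4.5--4.7. So your appeal to a Poisson-type majorization on $\mathcal{U}$ and to the Hardy--Littlewood maximal operator of the homogeneous group (of homogeneous dimension $22$) is resting on a transference mechanism that is unavailable here; as written, that step fails.

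The paper's actual route avoids the group action on the function class entirely: it slices. For each fixed $\tau_1$ and $\delta>0$ it shows (Lemma 4.4) that $\tau_2\mapsto F(\tau_1,\tau_2+\delta+|\tau_1|^2)$ lies in the one-variable Hardy space $\mathcal{H}^p(\mathbb{R}_+^8)$, the proof being a direct sub-mean-value estimate \eqref{5.10} over a product of balls followed by Fubini --- only translation invariance of the \emph{measure} is used, never of the class of $\mathbb{O}$-analytic functions. The classical half-space theory on $\mathbb{R}_+^8$ (Lemma 4.3) then supplies the maximal inequality \eqref{5.6}, the a.e.\ and $L^p$ limits \eqref{5.7}--\eqref{5.8}, and the slicewise isometry \eqref{5.9}; integrating over $\tau_1$, letting $\delta\to0$ with Fatou, and using dominated convergence against the integrated maximal function yields (i) and (iii), while (ii) follows from the local sup bound derived from \eqref{5.10}. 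If you want to salvage your plan, you must replace the ``Poisson integral on $\mathcal{U}$ plus maximal function on $\mathcal{O}$'' paragraph by this slicing argument (or an equivalent device that never moves an $\mathbb{O}$-analytic function by a non-real group element); the rest of your steps then go through essentially as you describe.
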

\vskip 0.3cm Its proof is based on the Subharmonicity of powers of
$\mathbb{O}$-analytic functions and some lemmas concerning the Hardy
space $\mathcal{H}^p(\mathbb{R}_+^8).$

Denote the upper half space $\{\tau=u+\mathbf{e}\cdot v: u>0\}
\subset \mathbb{R}^8$ by $\mathbb{R}_+^8.$ The Hardy space
$\mathcal{H}^p(\mathbb{R}_+^8)$ ($\frac{6}{7}<p<\infty$) is defined
to be the set of functions $f(\tau)$ which are left
$\mathbb{O}$-analytic on $\mathbb{R}_+^8$ with respect to
$\tau=u+\mathbf{e}\cdot v$ and satisfy
$\|f\|_{\mathcal{H}^p(\mathbb{R}_+^8)}:=\big
(\sup_{u>0}\int_{\mathbb{R}^7}|f(u+\mathbf{e}\cdot
v)|^pdv\big)^\frac{1}{p}<\infty.$

\vskip 0.3cm \noindent \textbf{Lemma 4.2.}\emph{ \cite{AD} Suppose
$f\in C^1(\Omega,\mathbb{O})$ satisfies $Df=0,$  then $|f|^p$ is
subharmonic in $\Omega$ when $p\geq \frac{6}{7}.$} \vskip 0.2cm By
analogous discussions as in \cite{DH,GM}, one can prove that \vskip
0.3cm\noindent \textbf{Lemma 4.3.}\emph{ For any
$f\in\mathcal{H}^p(\mathbb{R}_+^8)~(\frac{6}{7}<p<\infty)$, there
holds
\begin{eqnarray}\label{5.6}
\int_{\mathbb{R}^7}\sup_{u>0}|f(u+\mathbf{e}\cdot v)|^pdv\leq C_p
\|f\|_{\mathcal{H}^p(\mathbb{R}_+^8)}^p.
\end{eqnarray}
And there exists an $f^b\in L^p(\mathbb{R}^7)$, such that
\begin{equation}\label{5.7}
f(u+\mathbf{e}\cdot v)\rightarrow f^b(v)~(u\rightarrow 0),\quad
\mbox{for a.e.}\,v\in\mathbb{R}^7,
\end{equation}
\begin{equation}\label{5.8}
\lim_{u\rightarrow 0}\int_{\mathbb{R}^7}|f(u+\mathbf{e}\cdot
v)-f^b(v)|^pdv=0.
\end{equation}
Moreover,
\begin{equation}\label{5.9}
\|f^b\|_{L^p(\mathbb{R}^7)}=\|f\|_{\mathcal{H}^p(\mathbb{R}_+^8)}.
\end{equation}
} \vskip 0.3cm\noindent \textbf{Lemma 4.4.}\emph{ For each $F\in
\mathcal{H}^p(\mathcal{U})$ $(\frac{6}{7}<p<\infty)$, $\tau_1\in
\mathbb{O}$ and $\delta>0,$ we have
$$
f(\tau_2):=F(\tau_1,\tau_2+\delta+|\tau_1|^2)\in\mathcal{H}^p(\mathbb{R}_+^8).
$$
}

\noindent\textbf{Remark.} For $\mathcal {H}^2(\mathcal {U}^n)$ with
several complex variables or $\mathcal {H}^p(\mathcal {U}_n)$ with
several quaternionic variables, the proof of the lemma corresponding
to Lemma 4.4 is to consider the special case when $\tau_1=0$ and
$\delta>0$ first, then prove the general case through the action of
Heisenberg group on the Siegel half space. However, this method is
not valid for Lemma 4.4, since a left (right) $\mathbb{O}$-analytic
function in general does not preserve the analyticity after left
(right) multiplying the variable by an octonion constant. In fact,
we have

\vskip0.3cm\noindent\textbf{Proposition 4.5.} \emph{Suppose
$F(q_1,q_2)$ is left $\mathbb{H}$-regular on $\mathbb{H}\times
\mathbb{H}$ with respect to $q_1$ and $q_2$ respectively, then for
any quaternionic constant $\alpha,$ $f(q_1):=F(q_1,\alpha q_1)$ is
left $\mathbb{H}$-regular on $\mathbb{H}$ with respect to $q_1.$}

\vskip0.3cm\noindent\textbf{Proposition 4.6.} \emph{Suppose
$F(\tau_1,\tau_2)$ is left $\mathbb{O}$-analytic on
$\mathbb{O}\times \mathbb{O}$ with respect to $\tau_1$ and $\tau_2$
respectively, then the following two conditions are equivalent:
\begin{enumerate}[(i)]
\item For any octonionic constant $\alpha,$
$f(\tau_1):=F(\tau_1,\alpha\tau_1)$ is left $\mathbb{O}$-analytic on
$\mathbb{O}$ with respect to $\tau_1.$
\item For each fixed $\tau_1\in \mathbb{O},$ all components of
$g(\tau_2):=\overline{F(\tau_1,\tau_2)}$ consist a Stein--Weiss
conjugate harmonic system on $\mathbb{R}^8.$
\end{enumerate}
} \vskip0.2cm Proposition 4.5 can be verified by direct computation,
Proposition 4.6 is a corollary of the following:
\vskip0.3cm\noindent\textbf{Proposition 4.7.}
\emph{$\forall\alpha\in \mathbb{O}$, $f(\alpha x)$ is left
$\mathbb{O}$-analytic on $\mathbb{O}$ $\Longleftrightarrow$
$\forall\alpha\in \mathbb{O}$, $f(x \alpha)$ is right
$\mathbb{O}$-analytic on $\mathbb{O}$ $\Longleftrightarrow$ All
components of $\overline{f(x)}$ consist a Stein--Weiss conjugate
harmonic system on $\mathbb{R}^8$}
\begin{proof}
Denote $f(x)=\sum_{j=0}^7f_je_j$, $\alpha=\sum_{l=0}^7\alpha_le_l$,
$y(x)=\alpha x=\sum_{k=0}^7y_k(x)e_k$, then $y_k=\mbox{Re}((\alpha
x)\overline{e_k})$, $\frac{\partial y_k}{\partial
x_i}=\mbox{Re}((\alpha e_i)\overline{e_k})=\sum_{l=0}^7\alpha_l
\mbox{Re}((e_le_i)\overline{e_k})$. Thus we have
\begin{align*}
D(f(\alpha x))&=\sum_{i,j}e_ie_j\frac{\partial f_j}{\partial
x_i}\\&=\sum_{i,j,k}e_ie_j\frac{\partial f_j}{\partial
y_k}\frac{\partial y_k}{\partial x_i}
\\&=
\sum_{i,j,k,l}\alpha_le_ie_j\frac{\partial f_j}{\partial
y_k}\mbox{Re}((e_le_i)\overline{e_k})
\\&=
\sum_{j,k,l}\alpha_l(\varepsilon \overline{e_l}e_k)e_j\frac{\partial
f_j}{\partial y_k}\mbox{Re} ((e_l(\varepsilon
\overline{e_l}e_k))\overline{e_k})\qquad (\varepsilon=1 \mbox{ or }
-1)
\\&=
\sum_{j,k,l}\varepsilon^2 \alpha_l
(\overline{e_l}e_k)e_j\frac{\partial f_j}{\partial y_k}
\\&=
\sum_{j,k}(\overline{\alpha}e_k)e_j\frac{\partial f_j}{\partial y_k}
\\&=
\sum_{j,k}\overline{\alpha}(e_ke_j)\frac{\partial f_j}{\partial
y_k}+\sum_{j,k}[\overline{\alpha},e_k,e_j]\frac{\partial
f_j}{\partial y_k}
\\&=\overline{\alpha}Df(y)+\sum_{1\leq j\neq k\leq 7}
[\overline{\alpha},e_k,e_j]
\frac{\partial f_j}{\partial y_k}
\\&=\overline{\alpha}Df(y)+\sum_{1\leq j<k\leq 7}
(\frac{\partial f_j}{\partial y_k}
-\frac{\partial f_k}{\partial y_j})
[\overline{\alpha},e_k,e_j],
\end{align*}
from which it is not difficult to get (cf. \cite{LL})
\begin{align*}
&\forall\alpha\in \mathbb{O}, D(f(\alpha x))=0\\
\Longleftrightarrow&\frac{\partial f_0}{\partial
x_0}=\sum_{i=1}^7\frac{\partial f_i}{\partial x_i},~\frac{\partial
f_0}{\partial x_i}=-\frac{\partial f_i}{\partial x_0}~(1\leq i\leq
7)\mbox{ and }\frac{\partial f_j}{\partial x_k}=\frac{\partial
f_k}{\partial x_j}~(1\leq j<k\leq 7).
\end{align*}
\end{proof}

\begin{proof}[\textbf{Proof of Lemma 4.4}] Write $\tau_2=u+\mathbf{e}\cdot v$,
for any $\omega\in \mathbb{O}$, applying Lemma 4.2 to the function
\begin{eqnarray*}
f(u+\mathbf{e}\cdot v)=F(\omega,u+\delta+|\omega|^2+\mathbf{e}\cdot
v),\quad u>0,
\end{eqnarray*}
we get
\begin{eqnarray}\label{5.10}
|f(u+\mathbf{e}\cdot v)|^p\leq
c_{\omega,\delta}\int_{|\tau_1|<\alpha,\,\,
|\tau_2|<\delta/2}|F(\tau_1+\omega,\tau_2+u
+\delta+|\omega|^2+\mathbf{e} \cdot v)|^pd\tau_1d\tau_2,
\end{eqnarray}
where $\alpha>0$ satisfies $\alpha^2+2\alpha|\omega|=\delta/2,$ and
$c_{\omega,\delta}^{-1}$ is the volume of the cuboid
$\{(\tau_1,\tau_2)\in \mathbb{O}^2: |\tau_1|<\alpha,
|\tau_2|<\delta/2\}$. Since $|\mbox{Re}\,\tau_2|<\delta/2$ and
$u>0,$ we have $\mbox{Re}\,(\tau_2+u+\delta+|\omega|^2)\in
(\delta/2+|\omega|^2,3\delta/2+|\omega|^2),$ thus
$\mbox{Re}\,(\tau_2+u+\delta+|\omega|^2)>|\tau_1+\omega|^2,$ which
guarantees that the integration in (\ref{5.10}) makes sense. Write
$\tau_2=x+\mathbf{e}\cdot y$ and integrate (\ref{5.10}) on both
sides with respect to $v$ over $\mathbb{R}^7,$ after applying the
Fubini theorem, we deduce that
\begin{align*}
&\int_{\mathbb{R}^7}|f(u+\mathbf{e}\cdot v)|^pdv\\ \leq&
c_{\omega,\delta}\int_{|\tau_1|<
\alpha}\int_{\mathbb{R}^7}\int_{|\tau_2|<\delta/2}
|F(\tau_1+\omega,\tau_2+u+\delta+|\omega|^2+\mathbf{e}\cdot
v)|^pd\tau_1dvd\tau_2\\ \leq&
c'_{\omega,\delta}\int_{|\tau_1|<\alpha}
\int_{\mathbb{R}^7}\int_{|x|<\delta/2}|
F(\tau_1+\omega,x+u+\delta+|\omega|^2+\mathbf{e}\cdot
y)|^pd\tau_1dydx.
\end{align*}
Making change of variables $x+u+\delta+|\omega|^2=
\varepsilon+|\tau_1+\omega|^2.$ Since $|x|<\delta/2$ and
$|\tau_1+\omega|^2<\delta/2+|\omega|^2,$ the range of the new
variable $\varepsilon$ lies in the interval
$(u,u+3\delta/2+|\omega|^2).$ So the last integral is majorized by
\begin{align*}
&c'_{\omega,\delta}\int_{u}^{u+3\delta/2+|\omega|^2}
\int_{|\tau_1|<\alpha}\int_{\mathbb{R}^7}
|F(\tau_1+\omega,\varepsilon+|\tau_1+\omega|^2+\mathbf{e}\cdot
y)|^pd\tau_1dyd\varepsilon\\ \leq&
c'_{\omega,\delta}(3\delta/2+|\omega|^2)\|F\|_{
\mathcal{H}^p(\mathcal{U})}^p<\infty.
\end{align*}
i.e., $f(u+\mathbf{e}\cdot v)=F(\omega,u+\delta+
|\omega|^2+\mathbf{e}\cdot v)\in \mathcal{H}^p(\mathbb{R}_+^8).$
\end{proof}

\begin{proof}[\textbf{Proof of Theorem \ref{o1}}]
Applying the maximal inequality (\ref{5.6}) and equality (\ref{5.9})
to the function $f(\tau_2)=F(\tau_1,\tau_2+\delta +|\tau_1|^2)$
($\tau_2=x+\mathbf{e}\cdot y$), we obtain
$$
\int_{\mathbb{R}^7}\sup_{x>0}|F(\tau_1,x+\delta+|\tau_1|^2+\mathbf{e}\cdot
y)|^pdy\leq
C_p\int_{\mathbb{R}^7}|F(\tau_1,\delta+|\tau_1|^2+\mathbf{e}\cdot
y)|^pdy,
$$
integrate it over $\tau_1\in \mathbb{O}$ and write $x=\varepsilon,$
one gets
$$
\int_{\partial\mathcal{U}}\sup_{\varepsilon>0}|F(\tau+
(\varepsilon+\delta)\mathbf{e_0})|^pd\beta(\tau)\leq
C_p\int_{\partial\mathcal{U}}|F(\tau+\delta\mathbf{e_0})|^pd\beta(\tau).
$$
Letting $\delta\rightarrow0$ on both sides, by Fatou's lemma we see
that
$$
\int_{\partial\mathcal{U}}\sup_{\varepsilon>0}|F(\tau+
\varepsilon\mathbf{e_0})|^pd\beta(\tau)\leq
C_p\|F\|_{\mathcal{H}^p(\mathcal{U})}^p<\infty.
$$

We conclude from the above inequality that for almost all $\tau_1\in
\mathbb{O},$ the function $F(\tau_1,\tau_2+|\tau_1|^2),$ as a
function of $\tau_2,$ belongs to $\mathcal{H}^p(\mathbb{R}_+^8).$
Thus by (\ref{5.7}) we see that the limit
$\lim_{\varepsilon\rightarrow
0}F(\tau+\varepsilon\mathbf{e_0})=F^b(\tau)$ exists for almost every
$\tau\in\partial\mathcal{U}.$ Note that
\begin{align*}
\int_{\partial\mathcal{U}}|F_\varepsilon(\tau)-
F^b(\tau)|^pd\beta(\tau)\leq2^p
\int_{\partial\mathcal{U}}\sup_{\varepsilon>0}
|F_\varepsilon(\tau)|^pd\beta(\tau)\leq 2^pC_p\|F\|
_{\mathcal{H}^p(\mathcal{U})}^p<\infty,
\end{align*}
by Lebesgue's dominated convergence theorem we arrive at
$$\lim_{\varepsilon\rightarrow
0}\int_{\partial\mathcal{U}}|F_\varepsilon(\tau)-
F^b(\tau)|^pd\beta(\tau)=0.$$

Now we show the property (\mbox{iii}). By Fatou's lemma, it follows
that
\begin{align}\label{5.11}
\int_{\partial\mathcal{U}}|F^b(\tau)|^pd\beta(\tau)\leq\sup_
{\varepsilon>0}
\int_{\partial\mathcal{U}}|F(\tau+\varepsilon\mathbf{e_0})|
^pd\beta(\tau)=\|F\|_{\mathcal{H}^p(\mathcal{U})}^p.
\end{align}
On the other hand, (\ref{5.9}) leads to
\begin{align*}
\int_{\mathbb{R}^7}|F(\tau_1,\varepsilon+|\tau_1|^2+\mathbf{e}\cdot
y)|^pdy&\leq\sup_{\varepsilon>0}\int_{\mathbb{R}^7}|F(\tau_1,\varepsilon+
|\tau_1|^2+\mathbf{e}\cdot
y)|^pdy\\&=\int_{\mathbb{R}^7}|F^b(\tau_1,|\tau_1|^2+\mathbf{e}\cdot
y)|^pdy,
\end{align*}
integrating it over $\tau_1\in \mathbb{O}$ on both sides and taking
supremum over $\varepsilon>0,$ combining (\ref{5.11}) we get
property (\mbox{iii}).

To prove the assertion (\mbox{ii}), one can deduce from the
inequality (\ref{5.10}) that for any compact set $\mathbb{K}\subset
\mathcal {U},$ there always holds
$$
\sup_{\tau\in \mathbb{K}}|F(\tau)|\leq C_{\mathbb{K},
p}\|F\|_{\mathcal{H}^p(\mathcal{U})},
$$
from which we conclude that if a sequence $\{F_n\}$ converges in
$\mathcal{H}^p(\mathcal{U})$ norm (or metric), then it must converge
uniformly on any compact subset of $\mathcal{U},$ which implies that
the space $\mathcal{H}^p(\mathcal{U})$ is complete with respect to
its norm (or metric). The proof of Theorem \ref{o1} is complete .
\end{proof}

\noindent\textbf{Remark.} Similarly, we can generalize Theorem
\ref{o1} to $\mathbb{O}^n.$ However, octonions is non-associative,
up to now, we are still not sure about the existence of the
octonionic Cauchy--Szeg\"{o} kernel, or how to derive the exact form
of the kernel provided that it exists?

\vskip 0.8cm \noindent{\Large\textbf{Acknowledgements}}

\vskip 0.2cm \noindent This work was partially supported by the
Industry-University-Research Institute Program Sponsored by
Guangdong Province and Academy of Science No. 2009B080701077. We
would like to thank Prof. Liu Heping and Peng Lizhong who suggested
us consider the related problems. This work was started in Autumn of
2008, while the first author was studying in South China Normal
University, under the supervision of the second author. Wang Jinxun
is grateful to Prof. Li Xingmin, for his guidance, discussions and
suggestions. The main results in this paper were obtained in Spring
of 2009, and were presented in a talk given by the first author in
International Conference on Harmonic Analysis and Partial
Differential Equations with Applications held in Beijing Normal
University in May of 2009. We thank the organizers of this
conference for their hospitality. Sincere thanks are also due to
Prof. Qian Tao, who invited us to visit University of Macau in
December, 2009, during that time we had the opportunity to
communicate with the group in Macau about the work in this paper.

\end{document}